\newcommand{\excise}[1]{}
\newtheorem{thm}{Theorem}[section]
\newtheorem{cor}[thm]{Corollary}
\newtheorem{prop}[thm]{Proposition}
\newtheorem{conj}[thm]{Conjecture}
\newtheorem{prob}[thm]{Problem}
\theoremstyle{definition}
\newtheorem{example}[thm]{Example}
\newtheorem{remark}[thm]{Remark}
\newtheorem{defn}[thm]{Definition}
\numberwithin{equation}{section}
\newcommand{\ring}[1]{\ensuremath{\mathbb{#1}}}
\renewcommand\>{\rangle}
\newcommand\<{\langle}
\newcommand\NN{\ring{N}}
\newcommand\QQ{\ring{Q}}
\newcommand\ZZ{\ring{Z}}
\newcommand\bb{{\mathbf b}}
\renewcommand\aa{{\mathbf a}}
\renewcommand\bb{{\mathbf b}}
\newcommand\ee{{\mathbf e}}
\newcommand\iso{\cong}
\DeclareMathOperator\lcm{lcm} 
\DeclareMathOperator\bul{bul} 
\DeclareMathOperator\Betti{Betti} 
\begin{document}

\mbox{}
\title{Factorization invariants in numerical monoids \qquad}
\author{Christopher O'Neill}
\address{Mathematics Department\\Texas A\&M University\\College Station, TX 77840}
\email{coneill@math.tamu.edu}
\author{Roberto Pelayo}
\address{Mathematics Department\\University of Hawai`i at Hilo\\Hilo, HI 96720}
\email{robertop@hawaii.edu}

\date{\today}

\begin{abstract}
\hspace{-2.05032pt}
Nonunique factorization in commutative monoids is often studied using factorization invariants, which assign to each monoid element a quantity determined by the factorization structure.  For numerical monoids (co-finite, additive submonoids of $\NN$), several factorization invariants have received much attention in the recent literature.  In this survey article, we give an overview of the length set, elasticity, delta set, $\omega$-primality, and catenary degree invariants in the setting of numerical monoids.  For each invariant, we present current major results in the literature and identify the primary open questions that remain.  
\end{abstract}
\maketitle


\section{Motivating numerical monoids}\label{s:intro}

Factorization theory has its genesis in studying the decomposition of natural numbers into its prime (irreducible) divisors.  In this multiplicative monoid, the Fundamental Theorem of Arithmetic guarantees that such a decomposition is unique.  Many other monoids, however, have elements that fail to admit a unique factorization into irreducibles; in such cases, algebraists are frequently interested in measuring, in various ways, how different the plural factorizations of an element can be.  

One does not have to look far for examples.  In fact, certain submonoids of the natural numbers have a rich non-unique factorization theory.  \emph{Numerical monoids} (co-finite, additive submonoids of the natural numbers) have garnered much recent interest, as their factorization theories are complicated enough to be interesting while controlled enough to be tractable.  The goal of this paper is to introduce various perspectives and tools for studying factorizations in numerical monoids, and to present open problems that are of primary interest in this field.  

Several natural examples of numerical monoids are found in the literature.  Historically, Frobenius' coin-exchange problem asked what monetary values one could make with relatively prime coin denominations.  A more endearing manifestation of numerical monoids is the McNugget monoid, which enumerates the number of chicken McNuggets one could buy by using the standard package sizes of 6, 9, and 20.  As with the coin-problem and the McNugget monoid, all numerical monoids can be described using a finite set of generators.  We provide the formal definition below.  In what follows, we let $\NN$ denote the set of non-negative integers.

\begin{defn}\label{d:numericalmonoid}
Let $n_1, n_2, \ldots, n_k$ be a collection of relatively prime positive integers.  The \emph{numerical monoid} $S$ generated by these integers is the set of all non-negative, integral linear combinations of these numbers.  In other words, $$S = \<n_1, n_2, \ldots, n_k\> = \{a_1n_1 + a_2n_2 + \cdots + a_kn_k : a_i \in \NN\}.$$
\end{defn}

\begin{remark}\label{r:mingenset}
This set is an additive monoid with identity $0$, and the condition that $\gcd(n_1, n_2, \ldots, n_k) = 1$ guarantees that there exist only finitely many natural numbers that are not elements of $S$.  In fact, every co-finite additive submonoid of $\NN$ can be generated by a finite set of relatively prime natural numbers; thus, numerical monoids are precisely the co-finite, additive submonoids of $\NN$.  

While generating sets of a fixed numerical monoid are non-unique, there always exists a unique minimal generating set (with respect to set-theoretic containment).  As these minimal generators cannot be represented in terms of the other generators, they are precisely the \emph{irreducible} elements of the numerical monoid.  For the remainder of the paper, we will assume that $\{n_1, n_2, \ldots, n_k\}$ constitutes the unique minimal generating set for a numerical monoid $S$ and that $n_1 < n_2 < \ldots < n_k$.  
\end{remark}

Both Frobenius' coin problem and the McNugget problem ask which natural numbers arise as non-negative, integral linear combinations of their respective generators.  As such, understanding the largest natural number that is not an element of $S$ (named for the aforementioned coin problem) is often useful.

\begin{defn}\label{d:frobenius}
For a numerical monoid $S$, the values in $\NN \setminus S$ are called the \emph{gaps} of $S$, and $F(S) = \max(\NN \setminus S)$ is called the \emph{Frobenius number} of $S$. 
\end{defn}

\begin{example}\label{e:numericalmonoid}
For two relatively prime positive integers $n_1, n_2$, the numerical monoid $S = \<n_1, n_2\>$ has $F(S) = n_1n_2 - (n_1 + n_2)$ as its Frobenius number~\cite{frobsylvester}.  Furthermore, all but finitely many elements of $S$ can be written in multiple ways as non-negative integral combinations of $n_1$ and $n_2$.  The simplest example is the element $n_1n_2 \in S$, which can be seen as $n_1$ copies of $n_2$ or as $n_2$ copies of $n_1$.  These two representations of $n_1n_2$ as an element of $S$ are called \emph{factorizations}; see Section~\ref{s:factorization} for precise definitions and notation.  

The McNugget monoid $M = \<6,9,20\>$ has Frobenius number 43; thus, for every $n > 43$, one can purchase exactly $n$ McNuggets using the standard package sizes of $6,9$, and $20$.  As with the numerical monoid $\<n_1,n_2\>$, almost every element in $M$ has multiple distinct factorizations.  For example, $60$ can be expressed as three copies of $20$, ten copies of $6$, or four copies of $6$ plus four copies of $9$.  Each such expression corresponds to a way to purchase exactly 60 McNuggets.
\end{example}

The existence of a Frobenius number for a numerical monoid $S$ guarantees that past some value, every natural number lies in $S$.  Therefore, as a set, a numerical monoid $S$ is a scattering of ``small" elements (those less than $F(S)$) and an infinite ray after $F(S)$.  What is less understood is the factorization theory of these elements, which is hinted at in Example~\ref{e:numericalmonoid}.  In general, the factorization theory of numerical monoids with $2$ minimal generators is well-understood, while those numerical monoids with 3 or more irreducible elements have a much more complicated factorization structure with numerous open questions.  

Factorization structures are often classified and quantified using a variety of important \emph{factorization invariants}, which generally measure how far from unique a monoid element's factorizations are.  The coming sections introduce several standard invariants, together with results and open questions in the context of numerical monoids.  Section~\ref{s:factorization} describes the strongest (but most cumbersome) invariant, the set of factorizations.  Then, Section~\ref{s:lengthsets} describes the length set and elasticity invariants, which measure, in different ways, how many irreducible elements appear in the factorizations of an element.  Section~\ref{s:deltasets} gives several important results for delta sets, which measure the gaps in lengths that are possible between distinct factorizations of a single element.  While length sets, elasticity, and delta sets record only information about the length of factorizations of an element, the catenary degree, which is described in Section~\ref{s:catenary}, measures how many irreducible elements different factorizations of the same element do not share in common.  Lastly, as non-unique factorizations coincide with the presence of irreducible elements that are not prime, Section~\ref{s:omega} describes $\omega$-primality, which measures how ``far from prime" a monoid element is.  


\section{Sets of factorizations}\label{s:factorization}

\begin{defn}\label{d:factorizations}
Fix a numerical monoid $S = \<n_1, \ldots, n_k\>$ and an element $n \in S$.  A~\emph{factorization} of $n$ is an expression $n = u_1 + \cdots + u_r$ of $n$ as a sum of irreducible elements $u_1, \ldots, u_r$ of $S$.  Collecting like terms, we often write factorizations of $n$ in the form $n = a_1n_1 + \cdots + a_kn_k$ (see Remark~\ref{r:mingenset}).  Write 
$$\mathsf Z_S(n) = \{(a_1, \ldots, a_k) : n = a_1n_1 + \cdots + a_kn_k\} \subset \NN^k$$
for the \emph{set of factorizations} of $n \in S$.  When there is no ambiguity, we often omit the subscript and simply write $\mathsf Z(n)$.  
\end{defn}

\begin{remark}\label{r:ehrhart}
The notation for factorizations in Definition~\ref{d:factorizations} is motivated in part by connections to discrete geometry.  In particular, each factorization of $n$ corresponds to an integer point in $\NN^k$, and the set $\mathsf Z_S(n)$ forms a codimension-1 linear cross-section of $\NN^k$ that coincides with the set of integer points of a convex integral polytope.  This viewpoint allows the use of techniques from lattice point enumeration and other tools from combinatorial commutative algebra when examining the factorization structure of numerical monoids \cite{continuousdiscretely,cca}.  
\end{remark}

The set of factorizations (Definition~\ref{d:setoffactorizations}) is an example of a perfect factorization invariant in that it encapsulates enough information to uniquely determine the underlying monoid (Theorem~\ref{t:setoffactorizations}).  However, such complete information comes at a cost: extracting information from it (or even simply writing it down) is a nontrivial task.  Invariants derived from the set of factorizations, such as the length set and elasticity invariants (Section~\ref{s:lengthsets}), are more manageable and easier to work with, but necessitate a loss of information.  Below, $2^{\NN^k}$ denotes the power set of $\NN^k$.

\begin{defn}\label{d:setoffactorizations}
Given a numerical monoid $S = \<n_1, \ldots, n_k\>$, let
$$\mathcal Z(S) = \{\mathsf Z_S(n) : n \in S\} \subset 2^{\NN^k}$$
denote the \emph{set of factorizations} of $S$.  
\end{defn}

Theorem~\ref{t:setoffactorizations} follows from the fact that a minimal presentation of a numerical monoid $S$ (as well as its defining congruence) can be recovered from $\mathcal Z(S)$.  Example~\ref{e:setoffactorizations} exhibits one possible method of doing so; see~\cite{numerical,fingenmon} for more background on monoid congruences and minimal presentations.  In fact, for a fixed $n \gg 0$, the set $\mathsf Z_S(n)$ alone contains enough information to recover the monoid structure of $S$.  As such, it is perhaps not surprising that $\mathcal Z(S)$ is often hard to describe in complete detail.  

\begin{thm}\label{t:setoffactorizations}
For two numerical monoids $S$ and $S'$, the following are equivalent.  
\begin{enumerate}[(a)]
\item $S = S'$, 
\item $S \iso S'$, and
\item $\mathcal Z(S) = \mathcal Z(S')$.
\end{enumerate}
\end{thm}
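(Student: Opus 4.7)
The plan is to establish the cycle $(a) \implies (c) \implies (b) \implies (a)$; the implication $(a) \implies (b)$ then drops out automatically. The easy implication $(a) \implies (c)$ holds because $\mathsf Z_S(n)$ is determined entirely by the underlying set $S$, so equal monoids produce identical collections of factorization sets.

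For $(c) \implies (b)$, I would recover the abstract monoid structure of $S$ from the data of $\mathcal Z(S)$. First, the embedding dimension $k$ can be read off from $\mathcal Z(S)$: for instance, $\mathsf Z_S(0) = \{(0,\ldots,0)\}$ belongs to $\mathcal Z(S)$, and the length of the displayed tuple is $k$. So $\mathcal Z(S) = \mathcal Z(S')$ forces a common value of $k$, and both factorization systems sit inside $2^{\NN^k}$. Second, the sets in $\mathcal Z(S)$ partition $\NN^k$ (each $(a_1,\ldots,a_k)$ lies in exactly one factorization set, namely that of $\sum_i a_i n_i$), and the associated equivalence relation is precisely the defining congruence $\sim_S$ on $\NN^k$ given by $a \sim_S b$ iff $\sum_i a_i n_i = \sum_i b_i n_i$. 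Since $S \iso \NN^k / {\sim_S}$ via $[a] \mapsto \sum_i a_i n_i$ and the congruences $\sim_S$ and $\sim_{S'}$ agree by hypothesis, one obtains $S \iso S'$.

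For $(b) \implies (a)$, I would argue that any monoid isomorphism $\varphi : S \to S'$ between numerical monoids extends uniquely to an isomorphism of groupifications. Because $\gcd(n_1,\ldots,n_k) = 1$ for both monoids (Remark~\ref{r:mingenset}), these groupifications are both $\ZZ$, so the extended map has the form $x \mapsto \pm x$. Since it must carry the positive elements of $S \subset \NN$ to positive elements of $S' \subset \NN$, the sign is positive, the extension is the identity on $\ZZ$, and hence $S = S'$.

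The main subtlety lies in the reconstruction step $(c) \implies (b)$: one must recognize that the collection $\mathcal Z(S)$ encodes not merely a family of sets but in fact the complete defining congruence of $S$ on $\NN^k$, together with the ambient dimension $k$. The subsequent rigidity argument $(b) \implies (a)$ is conceptually what distinguishes numerical monoids from general cancellative monoids, as it shows that no nontrivial abstract automorphism of a numerical monoid exists; the $\gcd$ hypothesis on the generators pins down the scaling of the embedding into $\NN$ and thereby converts an abstract isomorphism into literal equality.
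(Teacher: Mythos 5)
Your proposal is correct and complete. The paper itself does not write out a proof --- it simply cites \cite[Theorem~1.1]{fingenmon} --- but the surrounding discussion identifies the key idea as recovering the defining congruence of $S$ on $\NN^k$ from $\mathcal Z(S)$, which is exactly your $(c)\Rightarrow(b)$ step: the elements of $\mathcal Z(S)$ partition $\NN^k$, the induced equivalence relation is the kernel congruence of $(a_1,\ldots,a_k)\mapsto\sum_i a_in_i$, and equality of congruences gives $\NN^k/{\sim_S}=\NN^k/{\sim_{S'}}$, hence $S\iso S'$. Where you differ slightly is in how the isomorphism is upgraded to equality: the paper's Example~2.5 suggests recovering the actual generators directly from relations in $\mathcal Z(S)$ together with the $\gcd$ condition (in effect a direct $(c)\Rightarrow(a)$), whereas you factor through the abstract isomorphism and then invoke rigidity: any isomorphism of numerical monoids extends to an automorphism of the common groupification $\ZZ$, which must be $\pm\mathrm{id}$, and positivity forces $+\mathrm{id}$. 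That $(b)\Rightarrow(a)$ argument is the piece the paper leaves entirely implicit, and you handle it correctly; it is also the cleaner way to see why isomorphic numerical monoids must literally coincide, independent of any factorization data.
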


\begin{proof}
\cite[Theorem~1.1]{fingenmon}.  
\end{proof}

\begin{example}\label{e:setoffactorizations}
Fix a numerical monoid $S$, and suppose that the two-element sets $\{(3,0,0),(0,2,0)\}$ and $\{(10,0,0),(0,0,3)\}$ are each a subset of some element of $\mathcal Z(S)$.  Since each is a subset of $\NN^3$, $S$ must have three minimal generators, say $S = \<n_1, n_2, n_3\>$.  Moreover, since $(3,0,0)$ and $(0,2,0)$ are factorizations of the same element, we must have $3n_1 = 2n_2$, and similarly that $10n_1 = 3n_3$.  Since $\gcd(n_1, n_2, n_3) = 1$, we conclude that $S = \<6,9,20\>$, the McNugget monoid from Example~\ref{e:numericalmonoid}.  
\end{example}

Larger elements in a numerical monoid $S$ are more likely to have larger factorization sets.  More precisely, if $S$ has $k$ minimal generators, the size of $\mathsf Z(n)$ grows on the order of $n^{k-1}$.  Theorem~\ref{t:factorasymp}, which specializes the asymptotic result in \cite{factorasymp} to numerical monoids, can be found in~\cite{hilbertfactor}.

\begin{thm}[{\cite[Theorem 3.9]{hilbertfactor}}]\label{t:factorasymp}
Given $S = \<n_1, \ldots, n_k\>$ numerical, there exist periodic functions $a_0, \ldots, a_{k-2}:\NN \to \QQ$ whose periods divide $\lcm(n_1, \ldots, n_k)$ such that
$$|\mathsf Z_S(n)| = \frac{1}{(k-1)! n_1 \cdots n_k}n^{k-1} + a_{k-2}(n)n^{k-2} + \cdots + a_1(n)n + a_0(n)$$
for all $n \in S$.  
\end{thm}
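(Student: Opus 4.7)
The plan is to recognize $|\mathsf Z_S(n)|$ as an Ehrhart quasi-polynomial, in the spirit of Remark~\ref{r:ehrhart}. Let $P \subset \RR^k$ denote the $(k-1)$-dimensional rational simplex
$$P = \{x \in \RR_{\geq 0}^k : n_1 x_1 + \cdots + n_k x_k = 1\},$$
whose vertices are $(1/n_i)\,\mathbf e_i$ for $i = 1, \ldots, k$. Then the dilate $nP$ is precisely the slice $\{x \geq 0 : n_1 x_1 + \cdots + n_k x_k = n\}$, so
$$\mathsf Z_S(n) = nP \cap \ZZ^k$$
for every $n \in S$, and both sides are empty when $n \in \NN \setminus S$.

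By the Ehrhart quasi-polynomial theorem for rational polytopes, the counting function $L_P(n) = |nP \cap \ZZ^k|$ is a quasi-polynomial in $n$ of degree $\dim P = k-1$, and its periods divide the least common multiple of the denominators of the vertex coordinates of $P$, namely $\lcm(n_1, \ldots, n_k)$. This immediately furnishes periodic functions $c_{k-1}, a_{k-2}, \ldots, a_0 : \NN \to \QQ$, each of period dividing $\lcm(n_1, \ldots, n_k)$, such that
$$|\mathsf Z_S(n)| = c_{k-1}(n)\,n^{k-1} + a_{k-2}(n)\,n^{k-2} + \cdots + a_0(n).$$

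It remains to check that the leading ``coefficient'' $c_{k-1}(n)$ is actually the constant $\frac{1}{(k-1)!\, n_1 \cdots n_k}$. The leading coefficient of an Ehrhart quasi-polynomial equals the relative volume of the polytope with respect to the affine lattice in its affine hull; since $\gcd(n_1, \ldots, n_k) = 1$, the hyperplane $\{n_1 x_1 + \cdots + n_k x_k = 1\}$ meets $\ZZ^k$ in a coset of a saturated rank-$(k-1)$ sublattice, so this relative volume agrees with the standard normalized volume of $P$. A direct calculation---for instance, by comparing $P$ with the $k$-dimensional simplex of vertices $0, (1/n_1)\mathbf e_1, \ldots, (1/n_k)\mathbf e_k$, which has Euclidean volume $\frac{1}{k!\, n_1 \cdots n_k}$---yields the claimed constant.

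The main obstacle is this last step: carefully tracking the normalization between Euclidean and lattice volume on the affine hyperplane containing $P$. A clean alternative that sidesteps it is the generating-function approach via
$$\sum_{n \geq 0} |\mathsf Z_S(n)|\, z^n = \prod_{i=1}^k \frac{1}{1 - z^{n_i}},$$
followed by partial-fraction decomposition at the $\lcm(n_1,\ldots,n_k)$-th roots of unity. The pole of order $k$ at $z = 1$ contributes $\binom{n+k-1}{k-1}/(n_1 \cdots n_k)$, whose leading monomial matches $n^{k-1}/((k-1)!\,n_1 \cdots n_k)$, while the remaining poles at nontrivial roots of unity assemble (after grouping Galois-conjugate contributions) into periodic coefficients of strictly lower degree in $n$, yielding the stated form directly.
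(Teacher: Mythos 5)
Your argument is correct, and it takes a different route from the one the survey points to: the paper offers no proof of its own but cites \cite[Theorem~3.9]{hilbertfactor}, where the statement is extracted from the Hilbert series $\prod_{i=1}^{k}(1-t^{n_i})^{-1}$ of $\kk[y_1,\ldots,y_k]$ graded by $\deg y_i=n_i$ (specializing the asymptotic result of \cite{factorasymp}); that is essentially your second, generating-function argument, since partial-fraction expansion at the $\lcm(n_1,\ldots,n_k)$-th roots of unity is exactly how the quasi-polynomial is read off from that series. Your primary, Ehrhart-theoretic argument is sound as well, and the one step that genuinely needs the care you give it is the leading coefficient: for a rational polytope that is not full-dimensional, the top-degree coefficient can itself be periodic, and it is constant here only because $\gcd(n_1,\ldots,n_k)=1$ guarantees that every hyperplane $n_1x_1+\cdots+n_kx_k=n$ meets $\ZZ^k$. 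Concretely, $\ZZ^k\cap\{x : n_1x_1+\cdots+n_kx_k=0\}$ is a saturated rank-$(k-1)$ sublattice of covolume $\|(n_1,\ldots,n_k)\|$, while the Euclidean $(k-1)$-volume of $P$ is $\|(n_1,\ldots,n_k)\|/((k-1)!\,n_1\cdots n_k)$ by the cone comparison you indicate, so the two norms cancel and the relative volume is $1/((k-1)!\,n_1\cdots n_k)$, as required. The Ehrhart packaging buys the geometric picture advertised in Remark~\ref{r:ehrhart} (cross-sections of $\NN^k$, reciprocity, access to lattice-point machinery), while the Hilbert-series route avoids any volume normalization entirely and is the formulation that generalizes to the multigraded setting of \cite{hilbertfactor}.
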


\begin{remark}\label{r:computefactor}
Much of the recent literature on numerical monoids has focused on algorithms for explicit computation.  It is worth noting that although Theorem~\ref{t:factorasymp} implies $|\mathsf Z(n)|$ is polynomial in $n$, is it $NP$-hard to compute this set in general, as doing so requires solving a system of Diophantine equations.  See \cite{compoverview} for a survey of recent computational advances in this area.  

Several of the factorization invariants introduced in the coming sections, including those derived directly from the set of factorizations, benefit from faster computation methods.  In particular, several of these invariants admit algorithms that do not require computing large sets of factorizations; see, for instance, Remarks~\ref{r:deltadynamic} and~\ref{r:omegadynamic}.  Additional computational traction can also be obtained by specializing to certain families of monoids, such as those with a fixed number of generators or generating sets of a certain form; see, for instance, Theorems~\ref{t:elastarith}, \ref{t:deltaarithmetic} and~\ref{t:catnaryarithmetic}.  

Another computation technique in computing invariant values is to first compute the Ap\'ery set of $S$ (a set determined by the complement of $S$; see \cite{numerical}).  Although computing the Ap\'ery set from a list of generators is also $NP$-hard in general, many invariants can then be computed directly from it.  For instance, computing the Frobenius number of a numerical monoid is $NP$-hard in general \cite{diophantinefrob}, but if the Ap\'ery set is known, its computation is trivial \cite{selmersformula}.  In this way, the complexity of computing an invariant (or several invariants) can be ``transferred'' to the computation of the Apery set \cite{compoverview,compapery,aperyhilbert}.  Additionally, for some special families of numerical monoids, the Apery set is known to admit a closed form; see \cite{ros99,arfnumerical}.  
\end{remark}

\section{Length sets and elasticity}\label{s:lengthsets}

While the set of factorizations for a numerical monoid is a perfect invariant, it is often cumbersome to compute and encode.  The next two sections focus on invariants obtained by passing from factorizations to their lengths.  In this section, we introduce length sets and elasticity as useful invariants for numerical monoids.  We begin with the definition of a length set.

\begin{defn}\label{d:lengthset}
Fix a numerical monoid $S = \<n_1, \ldots, n_k\>$, and fix $n \in S$.  Given a factorization $\aa = (a_1,a_2, \ldots, a_k) \in \mathsf Z_S(n)$, we denote by $|\aa|$ the number of irreducibles in the factorization $\aa$; that is, $|\aa| =  a_1 + \cdots + a_k$.  The set of factorization lengths of~$n$, denoted $\mathsf L_S(n) = \{|\aa| : \aa \in \mathsf Z_S(n)\}$, is called the \emph{length set} of $n$.  When there is no ambiguity, we often omit the subscript and simply write $\mathsf L(n)$.  
\end{defn}

\begin{example}\label{e:lengthset}
The length set of an element is easily computed from its set of factorizations.  In the numerical monoid $S = \<7,10,13\>$, the element $20$ has factorization set $\mathsf Z(20) = \{(1,0,1), (0,2,0)\}$ and length set $\mathsf L(20) = \{2\}$.  This length set, however, is shared with other elements (e.g., $\mathsf L(14) = \{2\}$).  This example highlights one manner in which information is frequently lost when passing from $\mathsf Z(n)$ to $\mathsf L(n)$.  
\end{example}

Although distinct elements in a numerical monoid can have identical length sets, one may ask whether distinct numerical monoids can have different sets of length sets. In what follows, $2^{\NN}$ denotes the power set of $\NN$.

\begin{defn}\label{d:setsoflengthsets}
For a numerical monoid $S$, its \emph{set of length sets} is given by 
$$\mathcal L(S) = \{\mathsf L(n) : n \in S\} \subset 2^\NN.$$
\end{defn}

Investigations into whether sets of length sets determine finitely generated, cancellative, commutative monoids up to isomorphism are widespread in factorization theory.  The most famous conjecture is stated below for certain block monoids (that~is, monoids of zero-sum sequences of abelian group elements under concatenation).  Since a detailed introduction to block monoids would take us too far afield, we refer the interested reader to~\cite{nonuniq} for background on block monoids and~\cite{lengthsetconj1,lengthsetconj2} for recent progress on Conjecture~\ref{c:blocklensets}.  In what follows, let $\mathcal B(G)$ denote the block monoid over a group $G$; $\mathcal L(\mathcal B(G))$ is defined analogously for this class of monoids.  Note that for two abelian groups $G$ and $G'$, $\mathcal B(G) \iso \mathcal B(G')$ if and only if $G \iso G'$.

\begin{conj}\label{c:blocklensets}
Given two finite abelian groups $G$ and $G'$ with $|G|, |G'| > 3$, we have $\mathcal L(\mathcal B(G)) = \mathcal L(\mathcal B(G'))$ implies $\mathcal B(G) \cong \mathcal B(G')$.  
\end{conj}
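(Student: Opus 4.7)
Since this is a longstanding open conjecture in factorization theory, what follows is a plan of attack rather than a genuine proof sketch. The overall strategy would be to extract as many arithmetic invariants of $\mathcal B(G)$ as possible from the set of length sets $\mathcal L(\mathcal B(G))$, and then argue that these invariants collectively determine $G$ up to isomorphism.

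First I would catalog the numerical statistics that are visible directly in $\mathcal L(\mathcal B(G))$: the elasticity $\rho(\mathcal B(G)) = \sup\{(\max L)/(\min L) : L \in \mathcal L(\mathcal B(G))\}$, the global set of distances $\Delta(\mathcal B(G)) = \bigcup_L \Delta(L)$, the refined $k$-th elasticities $\rho_k(G) = \sup\{\max L : L \in \mathcal L(\mathcal B(G)),\ k \in L\}$, and the unions of sets of lengths $\mathcal U_k(G) = \bigcup\{L \in \mathcal L(\mathcal B(G)) : k \in L\}$. Each of these is already known to encode substantial structural information about $G$; for instance, $\rho(\mathcal B(G)) = D(G)/2$ whenever $|G| \geq 3$, where $D(G)$ is the Davenport constant. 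A hypothetical equality $\mathcal L(\mathcal B(G)) = \mathcal L(\mathcal B(G'))$ therefore forces $D(G) = D(G')$, $\rho_k(G) = \rho_k(G')$ for all $k$, and much more.

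The next step would be to exploit the primary decomposition $G \iso \bigoplus_p G_p$ and reduce to matching the $p$-components one prime at a time. Since $\mathcal B(G_p)$ embeds naturally into $\mathcal B(G)$ via sequences supported inside a single Sylow subgroup, I would try to characterize, intrinsically within $\mathcal L(\mathcal B(G))$, which length sets arise from a single prime component, and then invoke the known cases (cyclic $p$-groups, small elementary abelian groups, and certain rank-two families; see \cite{lengthsetconj1,lengthsetconj2}) componentwise, assembling a global isomorphism $G \iso G'$ from the component-wise matches.

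The main obstacle is the loss of information inherent in passing from $\mathsf Z$ to $\mathsf L$: non-isomorphic groups can produce identical elasticities, Davenport constants, delta sets, and unions of length sets, so any successful approach must ultimately reconstruct the family of minimal zero-sum sequences of $G$ from its numerical shadow $\mathcal L(\mathcal B(G))$. This is genuinely an inverse problem in additive combinatorics, and because the exact Davenport constant itself remains unknown for most non-cyclic $p$-groups, I would expect that a complete proof requires a substantial advance in inverse zero-sum theory rather than a purely formal manipulation of $\mathcal L(\mathcal B(G))$; this explains why the conjecture has resisted attack and why current progress is confined to special families.
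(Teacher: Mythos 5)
The statement you were asked to prove is Conjecture~\ref{c:blocklensets}, which the paper states as an \emph{open conjecture} --- it offers no proof, only pointers to \cite{lengthsetconj1,lengthsetconj2} for partial progress. There is therefore no proof in the paper to compare against, and your submission, by your own admission, is a research plan rather than a proof. It cannot be accepted as a proof of the statement.

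That said, a few remarks on the plan itself. The invariants you propose to extract ($\rho$, $\Delta$, the $\rho_k$, the unions $\mathcal U_k$, and hence the Davenport constant via $\rho(\mathcal B(G)) = D(G)/2$ for $|G| \ge 3$) are indeed the standard levers, and they are the ones used in the cited partial results. But your proposed reduction to primary components has a concrete gap: $\mathcal L(\mathcal B(G))$ is an unlabeled collection of finite subsets of $\NN$, with no record of which monoid element produced which length set, so there is no evident way to ``characterize intrinsically'' the length sets coming from sequences supported in a single Sylow subgroup. Identifying the sub-collection $\mathcal L(\mathcal B(G_p)) \subset \mathcal L(\mathcal B(G))$ from the ambient set alone is itself an inverse problem at least as hard as the ones you defer to, and the componentwise assembly would additionally need to rule out cross-component interference (length sets realized only by sequences mixing several primes). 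You correctly flag that the overall difficulty is the information lost in passing from $\mathsf Z$ to $\mathsf L$, but the plan as written does not close that gap, and the conjecture remains open.
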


Theorem~\ref{t:setsoflength} answers an analogous question for numerical monoids in the negative.

\begin{thm}\cite[Corollary~3.4]{setoflengthsets}\label{t:setsoflength}
There exist numerical monoids $S \ne S'$ with $\mathcal L(S) = \mathcal L(S')$.  In particular, sets of length sets do not characterize numerical monoids.  
\end{thm}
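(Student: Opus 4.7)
The plan is to show that the map $S \mapsto \mathcal L(S)$ on numerical monoids is not injective, by exhibiting a specific pair of distinct monoids $S \ne S'$ with $\mathcal L(S) = \mathcal L(S')$. Since no structural transformation on numerical monoids is known to preserve $\mathcal L$ while altering the underlying monoid, the natural strategy is to search for an example directly; to make such a search tractable, one restricts attention to families of monoids whose length sets admit a closed-form description.

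The most convenient such family is the class of arithmetic numerical monoids $S = \<a, a+d, a+2d, \ldots, a+kd\>$ (with $\gcd(a,d) = 1$). Writing a factorization $(x_0, \ldots, x_k)$ of $n$ in terms of its length $\ell = \sum_i x_i$ and weighted sum $\sum_i i\, x_i$ yields $n = a\ell + d\sum_i i\, x_i$, and a short combinatorial argument shows that $\mathsf L_S(n)$ is an arithmetic progression of common difference $d$ whose range is determined by $n$, $a$, and $a+kd$ alone. This closed form lets one enumerate $\mathcal L(S)$ efficiently and compare across small monoids: one loops over generator sets of bounded size, computes the resulting sets of length sets, and checks for coincidences $\mathcal L(S) = \mathcal L(S')$ with $S \ne S'$. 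Since the closed form shows that $\mathcal L(S)$ depends essentially only on the pair of extreme generators for large $n$, pairs of monoids sharing the same smallest and largest generators (but differing in intermediate ones, or in the number of generators entirely) are especially promising candidates.

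Once a candidate pair has been located---most likely by a short computer search over three- or four-generator monoids with small generators---the verification reduces to a finite case analysis, since the length set formula above (or its analogue for non-arithmetic monoids) controls all but finitely many elements below the Frobenius number, and the exceptional length sets of small elements can be listed explicitly. Distinctness of $S$ and $S'$ is immediate from comparing minimal generating sets, as guaranteed by Remark~\ref{r:mingenset}. The main obstacle, I expect, is not the verification itself but the search for the pair: one must guess the right combinatorial structure that produces the coincidence, since no general principle is known to generate such pairs, and the closed form for $\mathsf L_S(n)$ is only fully transparent in the arithmetic case.
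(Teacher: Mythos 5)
Your proposal takes essentially the same route as the paper's (which defers to the cited reference and is summarized in Remark~\ref{r:arithmetical}): restrict to arithmetical numerical monoids $\<a, a+d, \ldots, a+kd\>$, use the closed-form description of $\mathsf L_S(n)$ as an arithmetic progression of step $d$ determined by the parameters, and exhibit distinct parameter choices yielding the same set of length sets. The only real difference is that you would locate the coinciding pair by computer search (and your heuristic of matching extreme generators is secondary to matching $d$, since $d$ is the common difference of every nontrivial length set), whereas the cited proof reads the coincidence off directly from the explicit formula for $\mathcal L(S)$ in terms of $(a,d,k)$.
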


\begin{remark}\label{r:arithmetical}
The distinct numerical monoids with equal sets of length sets described in \cite{setoflengthsets} are called~\emph{arithmetical numerical monoids}, which are numerical monoids generated by an arithmetic sequence $a, a+d, \ldots, a+kd$ with $\gcd(a,d) = 1$ and $1 \leq k < a$. Theorem~\ref{t:setsoflength} is proven by computing the set of length sets of arithmetical numerical monoids in terms of the parameters $a,d$ and $k$ and showing that different values of these parameters yield the same set.
\end{remark}

The description of the set of length sets for arithmetical numerical monoids upon which Theorem~\ref{t:setsoflength} is based is rare and, in general, it is difficult to obtain such a description for other families of numerical monoids.  Part of the difficulty comes in computing every factorization length that an element in a numerical monoid can obtain.  Many times, the most interesting factorization lengths for an element are the largest and smallest ones.  The quotient of these two values gives a weaker factorization invariant called \emph{elasticity}, which is defined below.

\begin{defn}\label{d:elasticity}
Let $S$ be a numerical monoid, $n \in S$ nonzero, and $\mathsf L(n)$ its length set.  Let $\mathsf M(n) = \max \mathsf L(n)$ and $\mathsf m(n) = \min \mathsf L(n).$  The \emph{elasticity of $n$} is given by 
$$\rho(n) = \mathsf M(n)/\mathsf m(n).$$
The \emph{set of elasticities of $S$} is given by $R(S) = \{\rho(n) : n \in S, n \neq 0\}$.  The \emph{elasticity of $S$} is given by $\rho(S) = \sup R(S)$.  
\end{defn}

In what follows, we see that $\rho(S) < \infty$ for numerical monoids and that, in fact, $\rho(S) = \max R(S)$.  Moreover, elasticity is one of the few factorization invariants known to admit a closed form for any numerical monoid $S$.  

\begin{thm}[{\cite[Theorem 2.1 \& Corollary 2.3]{elasticity}}]\label{t:holdenmoore}
Let $S = \<n_1, n_2, \ldots, n_k\>$ be minimally generated by $n_1, \ldots, n_k$ with $n_1$ and $n_k$ the smallest and largest generators, respectively.  Then, $\rho(S) = \rho(n_1n_k) = n_k/n_1$ and the set of elasticities $R(S)$ has $n_k/n_1$ as its unique accumulation point.  
\end{thm}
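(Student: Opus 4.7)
The plan rests on one basic sandwich: every factorization $\aa = (a_1,\ldots,a_k) \in \mathsf Z_S(n)$ satisfies $n_1|\aa| \leq n \leq n_k|\aa|$, hence $n/n_k \leq |\aa| \leq n/n_1$, and therefore $\rho(n) \leq n_k/n_1$ for every nonzero $n \in S$. At $n = n_1n_k$ both bounds are attained simultaneously by the factorizations $(n_k,0,\ldots,0)$ and $(0,\ldots,0,n_1)$, giving $\mathsf M(n_1n_k) = n_k$, $\mathsf m(n_1n_k) = n_1$, and thus $\rho(n_1n_k) = n_k/n_1$. This already shows $\rho(S) = \rho(n_1n_k) = n_k/n_1$ and that the supremum is attained.

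For the accumulation point claim I would produce a strictly increasing sequence in $R(S)$ converging to $n_k/n_1$. Consider $N_m = n_1(mn_k+1) = n_1 + mn_1n_k$ for $m \geq 1$. Because $N_m$ is divisible by $n_1$, the upper sandwich becomes an equality: $(mn_k+1,0,\ldots,0)$ has length $N_m/n_1 = mn_k+1$, so $\mathsf M(N_m) = mn_k+1$. Rewriting $N_m = 1\cdot n_1 + mn_1 \cdot n_k$ yields the factorization $(1,0,\ldots,0,mn_1)$ of length $mn_1+1$; since $\mathsf m(N_m) \geq N_m/n_k = mn_1 + n_1/n_k$ is an integer and $0 < n_1/n_k < 1$, this forces $\mathsf m(N_m) = mn_1+1$. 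Hence
\[
\rho(N_m) = \frac{mn_k+1}{mn_1+1},
\]
which is strictly increasing in $m$ (the $m$-derivative has positive numerator $n_k - n_1$) and converges to $n_k/n_1$, witnessing $n_k/n_1$ as an accumulation point of $R(S)$.

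For uniqueness I would show $\rho(n) \to n_k/n_1$ as $n$ tends to infinity through elements of $S$. Writing $\Ap(S,n_1) = \{w_0,\ldots,w_{n_1-1}\}$ for the Ap\'ery set, every sufficiently large $n \in S$ is of the form $n = w_r + mn_1$ with $r \equiv n \pmod{n_1}$ and $m = (n-w_r)/n_1$. Adjoining $m$ copies of $n_1$ to any factorization of $w_r$ yields $\mathsf M(n) \geq m + \mathsf M(w_r) = n/n_1 - w_r/n_1 + \mathsf M(w_r)$; as $r$ ranges over a finite set, this gives $\mathsf M(n) \geq n/n_1 - C$ for some constant $C = C(S)$. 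A symmetric argument using $\Ap(S,n_k)$ gives $\mathsf m(n) \leq n/n_k + C'$. Combined with the opposite bounds from the first paragraph, $\rho(n) = (n/n_1 + O(1))/(n/n_k + O(1)) \to n_k/n_1$. Consequently, for any $r < n_k/n_1$ only finitely many $n \in S$ satisfy $\rho(n) \leq r$, so $R(S) \cap [0,r]$ is finite; and no value larger than $n_k/n_1$ can be an accumulation point since $R(S) \subseteq [1, n_k/n_1]$. Together these imply $n_k/n_1$ is the unique accumulation point of $R(S)$.

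The main obstacle is the asymptotic tracking in the third paragraph: controlling $\mathsf M(n) - n/n_1$ and $\mathsf m(n) - n/n_k$ uniformly in $n$ is what genuinely requires the Ap\'ery-set machinery. The remaining steps follow directly from the elementary length sandwich $n_1|\aa| \leq n \leq n_k|\aa|$.
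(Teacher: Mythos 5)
Your proof is correct. The paper gives no proof of its own (it defers entirely to \cite[Theorem 2.1 \& Corollary 2.3]{elasticity}), and your route --- the length sandwich $n_1|\aa| \le n \le n_k|\aa|$ to pin down $\rho(S) = \rho(n_1n_k) = n_k/n_1$, followed by Ap\'ery-set bounds $\mathsf M(n) \ge n/n_1 - C$ and $\mathsf m(n) \le n/n_k + C'$ to force $\rho(n) \to n_k/n_1$ and hence uniqueness of the accumulation point --- is the standard argument used in that reference and is the same asymptotic mechanism underlying Theorem~\ref{t:elasticityset}.
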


\begin{example}\label{e:elastsets}
Figure~\ref{fig:elastsets} depicts the elasticity of elements in the numerical monoids $S = \<20,21,45\>$ and $S' = \<7,10,13,16\>$.  The maximal values attained are $9/4$ and $16/7$, respectively, as ensured by Theorem~\ref{t:holdenmoore}.  Moreover, the monoid elements with maximal elasticity are precisely the multiples of the least common multiple of the first and last generators.  
\end{example}

\begin{figure}
\includegraphics[width=2.5in]{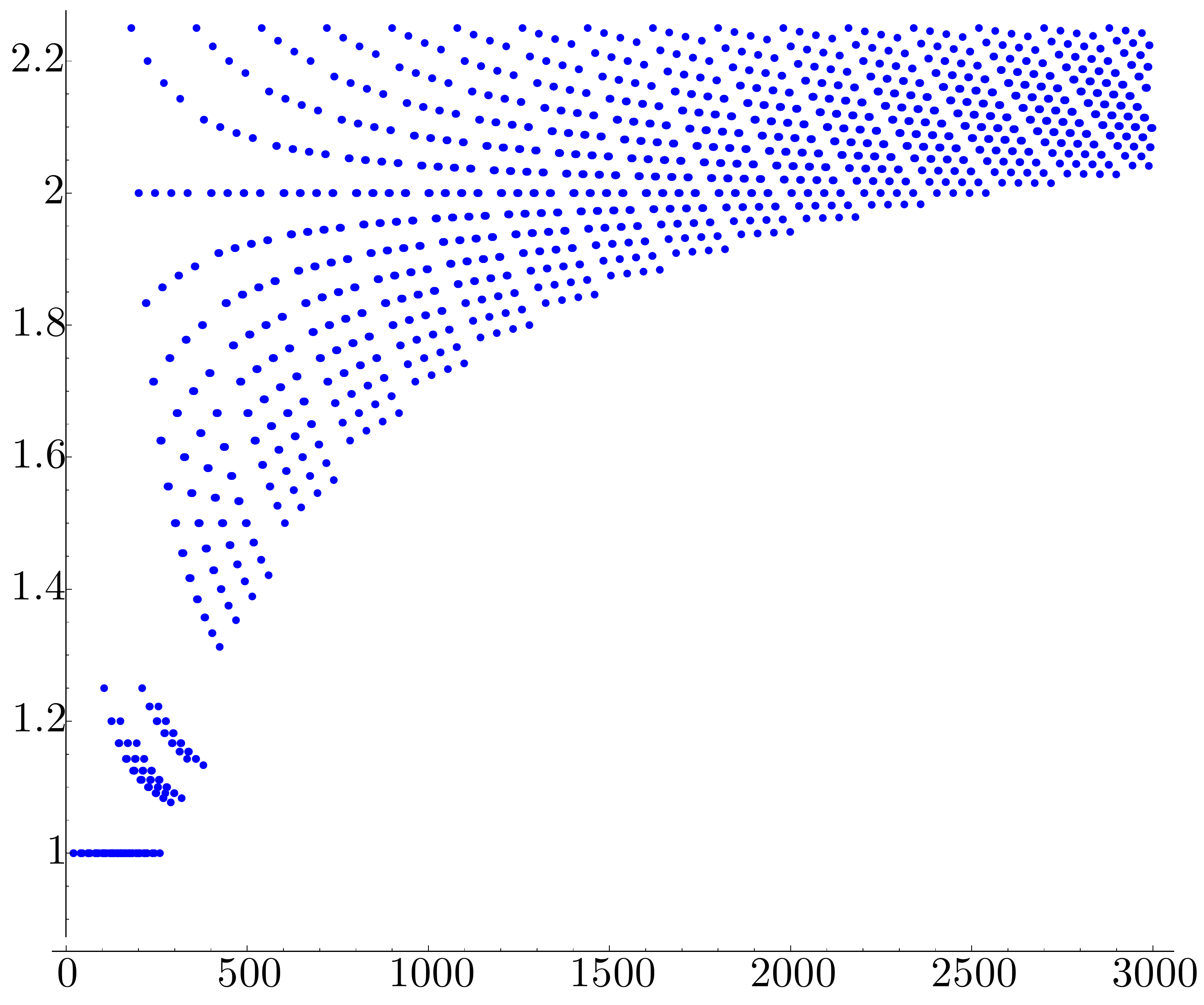}
\hspace{0.5in}
\includegraphics[width=2.5in]{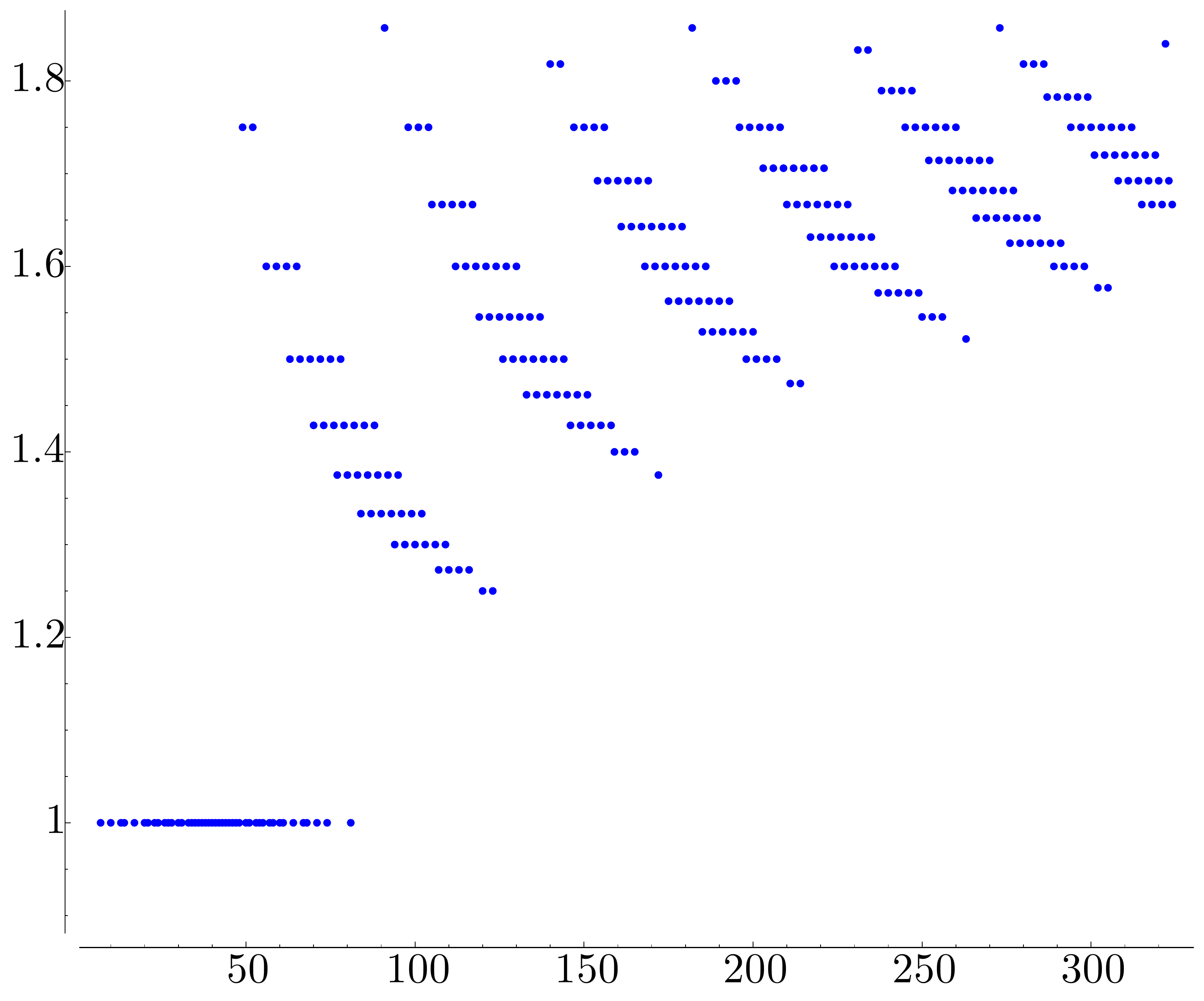}
\medskip
\caption{A plot showing the elasticities for the numerical monoid $\<20,21,45\>$ (left) and the arithmetical numerical monoid $\<7,10,13,16\>$ (right), produced using \texttt{SAGE} \cite{sage}.  Here, the point at $(n,q)$ indicates that $\rho(n) = q$.}  
\label{fig:elastsets}
\end{figure}

More can be said about the structure of the set $R(S)$ upon examining the asymptotic behavior of the maximum- and mininum-length factorization functions $\max \mathsf L_S(n)$ and $\min \mathsf L_S(n)$ for large monoid elements $n \in S$.  See \cite[Section~4]{elastsets} for more detail.  

\begin{thm}{\cite[Corollary~4.5]{elastsets}}\label{t:elasticityset}
Fix a numerical monoid $S = \<n_1, n_2, \ldots, n_k\>$.  
\begin{enumerate}[(a)]
\item For $n \ge n_{k-1}n_k$, we have 
$$\rho(n + n_1n_k) = \frac{M(n) + n_k}{m(n) + n_1}.$$
\item The set $R(S)$ is the union of a finite set and a collection of $n_1n_k$ monotone increasing sequences, each converging to $n_k/n_1$.  
\end{enumerate}
\end{thm}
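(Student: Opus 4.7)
The plan is to derive part (b) from part (a), so the focus is on part (a). Since $\rho(N) = M(N)/m(N)$, the formula in (a) reduces to proving the two identities
$$M(n + n_1 n_k) = M(n) + n_k \quad \text{and} \quad m(n + n_1 n_k) = m(n) + n_1$$
for $n \ge n_{k-1}n_k$. In each case, the ``easy'' inequality ($\ge$ for $M$, $\le$ for $m$) follows by appending $n_k$ copies of $n_1$ (respectively $n_1$ copies of $n_k$) to any length-extremal factorization of $n$.

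For the reverse inequalities, I would exploit a structural description of length-extremal factorizations via the Apéry set. The key observation is that in any length-maximizing factorization $(a_1, \ldots, a_k)$, the trade relation $n_1 n_j = n_j n_1$ forces $a_j < n_1$ for each $j \ge 2$: otherwise swapping $n_1$ copies of $n_j$ for $n_j$ copies of $n_1$ would strictly increase the length. Since every factorization of any element of $\Ap(S, n_1)$ uses no $n_1$ (by minimality of Apéry representatives), one can then show that the maximum-length factorization of $N$ takes the form ``$(N - w_r)/n_1$ copies of $n_1$ plus a max-length factorization of $w_r$,'' where $r = N \bmod n_1$ and $w_r$ is the Apéry representative at residue $r$. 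This yields the closed form $M(N) = (N - w_r)/n_1 + M(w_r)$ whenever $N \ge w_r$, and since $n$ and $n + n_1 n_k$ share residue $r$, the identity $M(n + n_1 n_k) = M(n) + n_k$ is immediate. The hypothesis $n \ge n_{k-1} n_k$ guarantees $n \ge \max \Ap(S, n_1) = F(S) + n_1$; this rests on a Frobenius bound like $F(S) \le F(\langle n_1, n_k\rangle) = n_1 n_k - n_1 - n_k$ (when $\gcd(n_1, n_k) = 1$, with a separate estimate otherwise), combined with $n_1 \le n_{k-1}$. A symmetric argument using $\Ap(S, n_k)$, together with the trade-reduction $a_j < n_{j+1}$ for $j < k$ in min-length factorizations, handles the companion identity for $m$.

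With (a) in hand, part (b) follows by partitioning $\{n \in S : n \ge n_{k-1}n_k\}$ into residue classes modulo $n_1 n_k$. Iterated application of (a) along each class produces the elasticity sequence
$$\left\{\frac{M(n_0) + j n_k}{m(n_0) + j n_1}\right\}_{j \ge 0},$$
which a short cross-multiplication argument shows is strictly increasing exactly when $M(n_0)/m(n_0) < n_k/n_1 = \rho(S)$, and tends to $n_k/n_1$ as $j \to \infty$. There are $n_1 n_k$ residue classes (each eventually nonempty in $S$), giving the claimed $n_1 n_k$ monotone sequences; the remaining $\rho$-values for the finitely many $n \in S$ with $n < n_{k-1}n_k$ constitute the finite set. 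The main obstacle I expect is in (a): pinning down the sharp threshold $n_{k-1}n_k$ from the local trade-reduction constraints requires both the Apéry-set decomposition of extremal factorizations and a careful Frobenius estimate to ensure $\max \Ap(S, n_1)$ and $\max \Ap(S, n_k)$ both lie below $n_{k-1}n_k$, with the latter being delicate when $\gcd(n_1, n_k) > 1$.
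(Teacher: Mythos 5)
Your reduction of (a) to the two identities $\mathsf M(n+n_1n_k)=\mathsf M(n)+n_k$ and $\mathsf m(n+n_1n_k)=\mathsf m(n)+n_1$, and your derivation of (b) from (a) by splitting $S$ into residue classes modulo $n_1n_k$, are both sound and match the intended structure. The gap is in the hard direction of (a): the structural claim that a maximum-length factorization of $N$ consists of $(N-w_r)/n_1$ copies of $n_1$ together with a maximum-length factorization of the Ap\'ery representative $w_r$, and hence that $\mathsf M(N)=(N-w_r)/n_1+\mathsf M(w_r)$, is false. Take $S=\<10,11,19\>$ and $r=7$: here $w_7=57$ has the single factorization $(0,0,3)$, so $\mathsf M(57)=3$, yet $77=7\cdot 11$ gives $\mathsf M(77)=7$ rather than $(77-57)/10+3=5$, and the discrepancy persists for arbitrarily large $N$ in this class (e.g.\ $\mathsf M(257)=25$, not $23$), so the closed form cannot be invoked even above the threshold $n_{k-1}n_k$. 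The faulty inference is from ``each coordinate $a_j<n_1$ for $j\ge 2$'' to ``the non-$n_1$ part of a maximal factorization descends to $w_r$'': an element far above $w_r$ in its residue class can still admit a long factorization that avoids $n_1$ entirely while keeping every coordinate below $n_1$ (as $77$ does, even though $77-10\in S$), so $\mathsf M$ can jump by more than $1$ along $n_1$-steps below the threshold and the maximal factorization need not reach the Ap\'ery level.

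What is true, and what produces the stated threshold, is the stronger \emph{global} bound that a maximum-length factorization contains fewer than $n_1$ atoms from $\{n_2,\dots,n_k\}$ in total, not merely fewer than $n_1$ of each. This comes from a pigeonhole/block-swap argument: among any $n_1$ such atoms, the partial sums modulo $n_1$ yield a nonempty sub-collection of size $s\le n_1$ summing to $cn_1$; since each atom exceeds $n_1$ we get $c>s$, and exchanging the sub-collection for $c$ copies of $n_1$ strictly increases length. Hence $\sum_{j\ge 2}a_jn_j\le (n_1-1)n_k<n_{k-1}n_k$, so every maximum-length factorization of $n\ge n_{k-1}n_k$ uses $n_1$ and $\mathsf M(n)=\mathsf M(n-n_1)+1$; the symmetric swap (a block of atoms from $\{n_1,\dots,n_{k-1}\}$ summing to $cn_k$ has more than $c$ atoms) bounds $\sum_{j<k}a_jn_j\le (n_k-1)n_{k-1}$ in a minimum-length factorization, and it is this side that makes $n_{k-1}n_k$ the binding threshold. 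Iterating the two recurrences $n_k$ and $n_1$ times respectively gives (a) with no Ap\'ery-set or Frobenius-number estimate at all, whereas your coordinatewise trade bound alone only yields the threshold $(n_1-1)(n_2+\cdots+n_k)$, which already exceeds $n_{k-1}n_k$ for $S=\<4,5,6,7\>$.
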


\begin{remark}\label{r:elasticityset}
Both plots in Figure~\ref{fig:elastsets} demonstrate the claims in Theorem~\ref{t:elasticityset}.  Roughly speaking, each plot can be viewed as a union of ``wedges'' of points, where each wedge contains one point from each of the monotone increasing sequences described in Theorem~\ref{t:elasticityset}(b).  Moreover, the right-hand plot depicts the elasticies of an arithmetical numerical monoid, where elasticity values are often repeated in quick succession.  It is this repetition of elasticitiy values that gives rise to Theorem~\ref{t:elastarith}.  
\end{remark}

Certainly, two numerical monoids with identical sets of length sets have equal sets of elasticities.  Conversely, there exist pairs of numerical monoids with equal elasticity sets and unequal sets of length sets.  For example, for $S = \<6,10,13,14\>$ and $S' = \<6,11,13,14\>$, a computation shows that $R(S) = R(S')$, while $\{4,6\} \in \mathcal L(S)$ but $\{4,6\} \not \in \mathcal L(S')$; see \cite[Example~3.11]{elastsets}.  Thus, in general, sets of elasticities do not determine sets of length sets.  Theorem~\ref{t:elastarith} demonstrates that, within the class of arithmetical numerical monoids, equality of sets of length sets is equivalent to equality of sets of elasticities.

\begin{thm}\cite[Theorem~1.2]{elastsets}\label{t:elastarith}
For two distinct arithmetical numerical monoids $S = \<a, a+d, \ldots, a+kd\>$ and $S' = \<a', a'+d',\ldots,a'+k'd'\>$, the following are equivalent:
\begin{enumerate}[(a)]
\item $R(S) = R(S')$
\item $\mathcal L(S) = \mathcal L(S')$
\end{enumerate}
\end{thm}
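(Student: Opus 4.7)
The implication $(b) \Rightarrow (a)$ is immediate from Definition~\ref{d:elasticity}: for each nonzero $n \in S$, the elasticity $\rho(n)$ equals $\max \mathsf L(n)/\min \mathsf L(n)$ and is therefore determined by $\mathsf L(n)$. Hence $\mathcal L(S) = \mathcal L(S')$ yields $R(S) = R(S')$. All the substance lies in the converse.

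For $(a) \Rightarrow (b)$, my plan mirrors the strategy behind Theorem~\ref{t:setsoflength} (see Remark~\ref{r:arithmetical}): parameterize both $\mathcal L(S)$ and $R(S)$ in closed form as functions of $(a,d,k)$, then show that $R(S) = R(S')$ forces the same parameter relation that $\mathcal L(S) = \mathcal L(S')$ does. First I would write $\mathcal L(a,d,k)$ as a union of finitely many arithmetic-progression templates whose endpoints are piecewise polynomial in the parameters, using the explicit length-set computation for arithmetical numerical monoids that underlies Theorem~\ref{t:setsoflength}. Second, using Theorem~\ref{t:elasticityset}, I would write $R(a,d,k)$ as a finite exceptional set together with $a(a+kd)$ monotone increasing sequences converging to $(a+kd)/a$, each determined via the recursion of Theorem~\ref{t:elasticityset}(a) by a ``base'' ratio $M(n)/m(n)$ for $n$ in a bounded window.

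With these two parameterizations in hand, applying Theorem~\ref{t:holdenmoore} to the hypothesis $R(S) = R(S')$ immediately equates the unique accumulation points, giving $(a+kd)/a = (a'+k'd')/a'$. The next step would be to extract further numerical constraints from the matching of the finitely many base ratios and the pattern in which they appear across the limiting sequences. My goal is to reduce $R(S) = R(S')$ to exactly the same system of relations in $(a,d,k)$ and $(a',d',k')$ that Remark~\ref{r:arithmetical} encodes for $\mathcal L(S) = \mathcal L(S')$; once this reduction is achieved, equality of length sets follows from equality of elasticity sets.

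The main obstacle is this parameter-matching step. Theorem~\ref{t:setsoflength} shows that distinct parameter triples already yield equal length sets, so the content of the present theorem is that the same collapse of parameter triples is induced by equality of elasticity sets---no finer, no coarser. To verify this, I would have to rule out pathological coincidences among the rational numbers $M(n)/m(n)$ (in particular, cross-sequence collisions of ratios) that might cause $R(S)$ to be insensitive to some parameter change that does alter $\mathcal L(S)$. I expect this to reduce to a careful numerator/denominator analysis of the finitely many base ratios enumerated in Step~2, combined with the rigidity supplied by fixing the accumulation point $(a+kd)/a$ and the count $a(a+kd)$ of limiting sequences.
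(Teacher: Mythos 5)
Your direction $(b) \Rightarrow (a)$ is correct and is indeed immediate: $\rho(n) = \max \mathsf L(n)/\min \mathsf L(n)$ is a function of the length set alone, so $R(S)$ is determined by $\mathcal L(S)$. The survey itself offers no proof of the theorem beyond the citation to \cite[Theorem~1.2]{elastsets}, and your overall strategy for $(a) \Rightarrow (b)$ --- compute both $\mathcal L(S)$ and $R(S)$ in closed form as functions of $(a,d,k)$, derive from each a criterion on parameter triples, and check that the two criteria coincide --- is the strategy that reference actually follows. But what you have written is a plan, not a proof: every step that carries the mathematical content is phrased as ``I would'' or ``I expect,'' and you yourself identify the parameter-matching step as the main obstacle without resolving it. Since Theorem~\ref{t:setsoflength} already shows that distinct triples $(a,d,k)$ can yield identical sets of length sets, the entire theorem lives in showing that $R(S) = R(S')$ forces \emph{exactly} that same identification of triples; deferring this is deferring the whole proof.

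Two concrete issues make the deferred step genuinely nontrivial rather than routine. First, passing from the pair $(\mathsf m(n), \mathsf M(n))$ to the ratio $\mathsf M(n)/\mathsf m(n)$ destroys the common scale: the value $3/2 \in R(S)$ cannot by itself distinguish the length-set endpoints $(2,3)$ from $(4,6)$, so a ``numerator/denominator analysis'' of elements of $R(S)$ does not directly recover the data that determines $\mathcal L(S)$; one must instead exploit the explicit formulas for $\mathsf m(n)$ and $\mathsf M(n)$ in the arithmetical case (where, by Theorem~\ref{t:deltaarithmetic}, $\mathsf L(n)$ is the full arithmetic progression from $\mathsf m(n)$ to $\mathsf M(n)$ with step $d$). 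Second, your appeal to ``the count $a(a+kd)$ of limiting sequences'' is not legitimate as stated: Theorem~\ref{t:elasticityset}(b) describes a way of \emph{generating} $R(S)$ as a union of $n_1 n_k$ monotone sequences, but that decomposition is not an invariant of the set $R(S)$ itself (sequences may coincide or interleave), so you cannot read the number of sequences off from the hypothesis $R(S) = R(S')$. The accumulation point does give you $(a+kd)/a = (a'+k'd')/a'$ via Theorem~\ref{t:holdenmoore}, but that single relation is far from pinning down the required correspondence of parameters. Until the explicit description of $R(S)$ for arithmetical monoids is written down and matched against the description of $\mathcal L(S)$ underlying Theorem~\ref{t:setsoflength}, the implication $(a) \Rightarrow (b)$ remains unproved.
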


Theorem~\ref{t:elasticityset} demonstrates that computing the set of elasticities is fairly tractable, while the set of length sets is, in general, much more difficult to describe.  Therefore, finding classes of numerical monoids (beyond the arithmetical case) for which the set of elasticities is as strong an invariant as the set of length sets is of much interest.  

\begin{prob}\label{pr:elastlencomp}
Characterize which distinct numerical monoids $S$ and $S'$ satisfy $R(S) = R(S')$ and $\mathcal L(S) \ne \mathcal L(S')$.  
\end{prob}

\section{The delta set}\label{s:deltasets}

In this section, we consider the \emph{delta set} invariant (Definition~\ref{d:deltaset}).  The delta set of a monoid element $n$, like its elasticity (Definition~\ref{d:elasticity}), is derived from the set of factorization lengths $\mathsf L(n)$.  

\begin{defn}\label{d:deltaset}
Fix a numerical monoid $S = \<n_1, \ldots, n_k\>$, and fix $n \in S$.  Writing $\mathsf L(n) = \{\ell_1 < \cdots < \ell_r\}$, the \emph{delta set} of $n$ is the set $\Delta_S(n) = \{\ell_i - \ell_{i-1} : 2 \le i \le r\}$ of successive differences of factorization lengths of $n$, and $\Delta(S) = \bigcup_{n \in S} \Delta_S(n)$.  When there is no ambiguity, we often omit the subscript and simply write $\Delta(n)$.  
\end{defn}

\begin{example}\label{e:delta2gen}
Consider the numerical monoid $S = \<n_1, n_2\>$ with two relatively prime generators.  If $n \in S$, then there exist $a, b \in N$ such that $n = an_1 + bn_2$.  Notice that for any $k \in \mathbb Z$, $n = (a + kn_2)n_1 + (b - kn_1)n_2$ is a factorization so long as $a + kn_2$ and $b-kn_1$ are non-negative.  In fact, it can be shown that all factorizations of $n$ in $S$ are of this form, corresponding to successive values of $k$.  Since the length of such a factorization is $a + b + k(n_2 - n_1)$, all successive length differences in $\mathsf L(n)$ yield $n_2 - n_1$.  Thus, for any $n \in S$, either $\Delta(n)$ is empty (when its length set is a singleton), or $\Delta(n) = \{n_2 - n_1\}.$  In particular, 
$$\Delta(S) = \{n_2 - n_1\}.$$
\end{example}

The above example is a special case of arithmetical numerical monoids, whose minimal generators form an arithmetic sequence (Remark~\ref{r:arithmetical}).  Such monoids have singleton delta sets consisting of the step size between these generators, a fact which plays a crutial role in characterizing their length sets (Theorems~\ref{t:setsoflength}).  

\begin{thm}\cite[Corollary~2.3]{setoflengthsets}\label{t:deltaarithmetic}
Fix positive integers $a$, $d$, and $k$ with $\gcd(a,d) = 1$ and $1 \leq k < a$.  For $S = \<a, a + d, a + 2d, \ldots, a + kd\>$ and $n \in S$, either $\Delta(n)$ is empty, or $\Delta(n) = \{d\}$.  In particular, $\Delta(S) = \{d\}$.  
\end{thm}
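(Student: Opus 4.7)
The plan is to parameterize the factorizations of $n \in S$ explicitly and deduce that $\mathsf L(n)$ is an arithmetic progression with common difference $d$, which immediately gives $\Delta(n) \subseteq \{d\}$.

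First, I would observe that any factorization $(c_0,c_1,\ldots,c_k) \in \mathsf Z(n)$ satisfies $n = aL + dM$, where $L = \sum_{i=0}^k c_i$ is the length of the factorization and $M = \sum_{i=0}^k i c_i$. Reducing modulo $d$ and using $\gcd(a,d) = 1$ shows that $L$ is uniquely determined modulo $d$ by $n$, so any two factorization lengths of $n$ differ by a multiple of $d$; in particular $\Delta(n) \subseteq d\,\ZZ_{>0}$.

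Next, I would characterize precisely which $L$ arise as factorization lengths. Given $L, M \geq 0$ with $aL + dM = n$, I claim $L \in \mathsf L(n)$ iff $0 \leq M \leq kL$. Necessity is immediate from $M = \sum_i i c_i \leq k \sum_i c_i = kL$. For sufficiency, construct a factorization greedily: setting $q = \lfloor M/k \rfloor$ and $r = M - qk$, let $c_k = q$, $c_r = 1$ if $r > 0$, $c_0 = L - \lceil M/k \rceil$, and $c_i = 0$ otherwise. A direct check gives $\sum_i c_i = L$ and $\sum_i i c_i = M$, and $c_0 \geq 0$ follows from $\lceil M/k \rceil \leq L$, which is equivalent to $M \leq kL$. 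Substituting $M = (n - aL)/d$ translates the bounds on $M$ into bounds on $L$, yielding
$$\mathsf L(n) = \bigl\{\, L \in \NN : L \equiv L_0 \pmod d,\ \tfrac{n}{a+kd} \leq L \leq \tfrac{n}{a} \,\bigr\},$$
where $L_0$ is the residue class forced by the congruence $aL \equiv n \pmod d$. This is an arithmetic progression with common difference $d$ (possibly empty or a singleton), so $\Delta(n) \subseteq \{d\}$, with equality precisely when $|\mathsf L(n)| \geq 2$.

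Finally, to conclude $\Delta(S) = \{d\}$, it suffices to exhibit one $n \in S$ with $|\mathsf L(n)| \geq 2$. The element $n = a(a+d)$ admits factorizations $(a+d, 0, \ldots, 0)$ and $(0, a, 0, \ldots, 0)$ of lengths $a+d$ and $a$, respectively, so $d \in \Delta(n) \subseteq \Delta(S)$. The main obstacle is the sufficiency step in the characterization of $\mathsf L(n)$, which requires producing a genuine $(k+1)$-tuple of non-negative integers realizing the prescribed $L$ and $M$; the greedy construction above handles this uniformly, so long as $k \ge 1$ (which is given).
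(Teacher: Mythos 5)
Your proof is correct, and it follows essentially the same route as the source the paper cites for this result (and as the paper's own two-generator computation in Example~\ref{e:delta2gen}): decompose each factorization length via $n = aL + dM$ with $0 \le M \le kL$, use $\gcd(a,d)=1$ to pin down $L$ modulo $d$, and verify that every admissible $L$ is realized, so that $\mathsf L(n)$ is an arithmetic progression with common difference $d$. Your greedy construction realizing a prescribed pair $(L,M)$ and the witness $n = a(a+d)$ for $\Delta(S) \ne \emptyset$ both check out.
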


Even slight generalizations of arithmetical numerical monoids have elusive delta sets; this leads us to the following open problem.  

\begin{prob}\label{pr:genarithmeticdelta}
Let $a$, $h$, $d$, and $k$ be positive integers such that $\gcd(a,d) = 1$ and $k < a$, and consider $S = \<a, ah + d, ah + 2d, \ldots, ah + kd\>$.  Compute $\Delta(S)$.  
\end{prob}

\begin{remark}\label{r:genarithmetic}
The generators of the numerical monoid in Problem~\ref{pr:genarithmeticdelta} form what is known as a \emph{generalized arithmetic sequence}.  Numerical monoids generated by generalized arithmetic sequences and arithmetic numerical monoids are closely connected: when $h = 1$, a generalized arithmetic sequence reduces to an arithmetic sequence; also, the last $k$ generators of a generalized arithmetic sequence form an arithmetic sequence.  These relationships, together with a precise membership criterion~\cite[Theorem~3.1]{omidali}, make working with this family of numerical monoids more tractable.  
\end{remark}

\begin{remark}\label{r:deltadifficulty}
Aside from the arithmetical setting, few natural families of numerical monoids are known to admit closed forms for their delta sets.  In general, given a numerical monoid $S$, it can be hard to prove that $d \notin \Delta(S)$ for a given integer $d$, especially if $\min \Delta(S) < d < \max \Delta(S)$, since it must be shown that $d$ does not lie in the delta set of even a single element of $S$.  Example~\ref{e:deltanumerical} demonstrates the sublety of this question.  
\end{remark}

\begin{example}\label{e:deltanumerical}
The delta set of elements of the numerical monoid $S = \<17,33,53,71\>$ are depicted in Figure~\ref{fig:deltanumerical}.  Notice that $\Delta(S) = \{2,4,6\}$, but $6$ only appears in $\Delta(266)$, $\Delta(283)$, and $\Delta(300)$.  Indeed, for $n > 300$, $\Delta(n) \subset \{2,4\}$; see Theorem~\ref{t:deltaperiodic}.  
\end{example}

\begin{figure}
\includegraphics[width=6in]{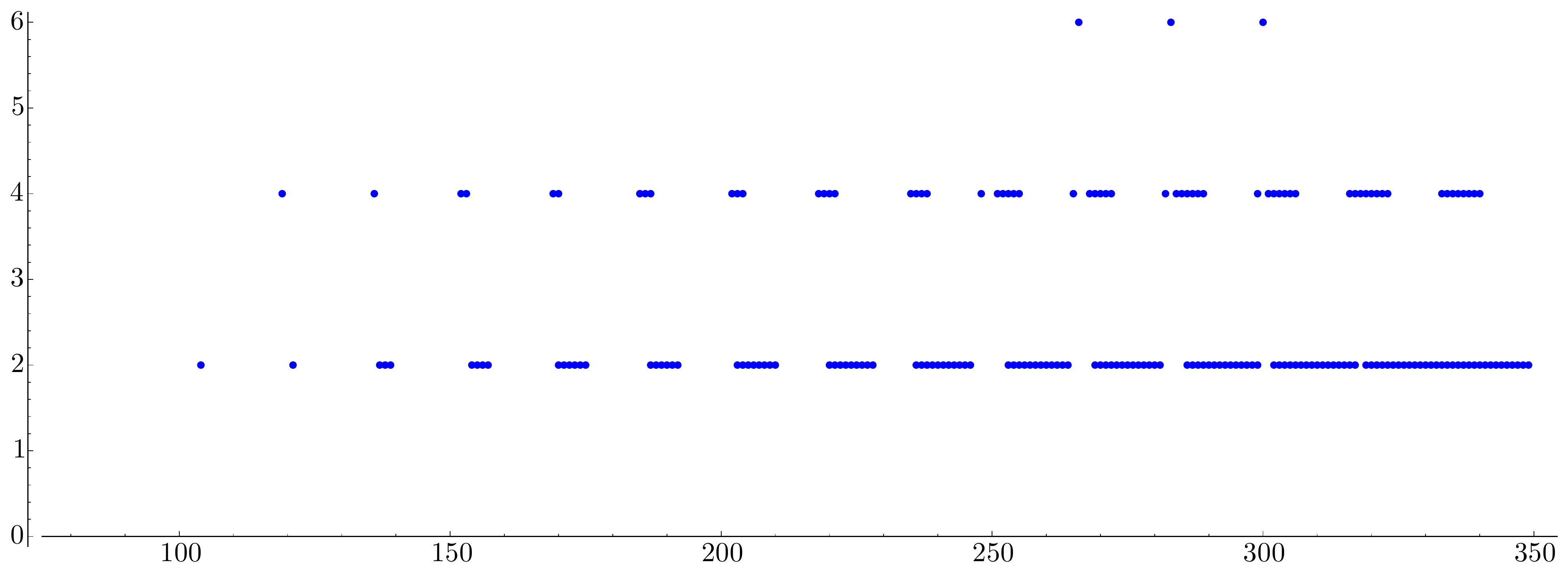}
\medskip
\caption{A plot showing the delta sets of elements in the numerical monoid $\<17,33,53,71\>$ from Example~\ref{e:deltanumerical}, produced using \texttt{SAGE} \cite{sage}.  Here, each point $(n,d)$ indicates that $d \in \Delta(n)$.}  
\label{fig:deltanumerical}
\end{figure}

\begin{remark}\label{r:deltablock}
In contrast to numerical monoids, the delta set of any block monoid $M = \mathcal B(G)$ over a finite abelian group $G$ is known to be an interval, and recent work has focused on determining its maximal element \cite[Section~6.7]{nonuniq}.  In particular, any positive integer $d \le \max \Delta(M)$ must also lie in $\Delta(M)$.  Roughly speaking, block monoids have abundant irreducible elements, so if $d \in \Delta(a)$ for some $a \in M$, one can produce an element in $M$ with $d - 1$ in its delta set by simply adding the appropriate irreducible elements to $a$.  This process effectively ``cuts down'' a length gap $d$ by carefully filling in a particular factorization length.  
\end{remark}

The difficulty in controlling the phenomenon discussed in Remark~\ref{r:deltablock} when considering numerical monoids motivates Problem~\ref{pr:deltarealization}, which is known as the \emph{delta set realization problem for numerical monoids}.  

\begin{prob}\label{pr:deltarealization}
Let $D \subset \ZZ_{>0}$ be a finite set of positive integers.  Determine if there exists a numerical monoid $S$ such that $\Delta(S) = D$.  
\end{prob}

Currently, the only known restrictions on $\Delta(S)$ for a numerical monoid $S$ are given in Theorem~\ref{t:mindelta}, each of which also holds for a much larger class of monoids.  

\begin{thm}[{\cite[Proposition~1.4.4]{nonuniq}}]\label{t:mindelta}
If $M$ is an atomic, cancellative, commutative monoid with $\Delta(M) \ne \emptyset$, then, $\min(\Delta(M)) = \gcd(\Delta(M))$.  If, additionally, $M$ is finitely generated, then $\Delta(M)$ is finite.  
\end{thm}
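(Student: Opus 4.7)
The plan is to establish the two assertions separately. For the first, set $d = \min \Delta(M)$ and $g = \gcd \Delta(M)$; since $g$ divides every element of $\Delta(M)$, in particular $g \mid d$, hence $g \le d$. The content of the statement $d = g$ is therefore the reverse inequality, which reduces to showing $d \mid \delta$ for every $\delta \in \Delta(M)$.

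I would argue by contradiction. Suppose $\delta \in \Delta(M)$ with $\delta = qd + r$ and $0 < r < d$. Pick witnesses $n, m \in M$ so that $\mathsf L(n)$ contains two consecutive entries $\ell_0 < \ell_0 + \delta$ and $\mathsf L(m)$ contains two consecutive entries $\ell < \ell + d$. Then examine the element $n + (q+1)m$: by summing compatible factorizations across the two summands, $\mathsf L(n + (q+1)m)$ contains the two arithmetic progressions
$$\bigl\{\ell_0 + (q+1)\ell + jd : 0 \le j \le q+1\bigr\} \quad \text{and} \quad \bigl\{\ell_0 + \delta + (q+1)\ell + jd : 0 \le j \le q+1\bigr\}.$$
Since the second progression is offset from the first by $\delta$, which differs from the nearest multiple of $d$ by $r \in (0, d)$, the two progressions interleave on their overlapping region, producing distinct lengths in $\mathsf L(n+(q+1)m)$ at distances $r$ and $d - r$ apart, both lying in $(0, d)$.

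The decisive step is then to show that such a short gap in the length set forces a short gap in the delta set, contradicting minimality of $d$. Fix such a pair $a < b$ with $b - a \in \{r, d-r\}$; the consecutive entries of $\mathsf L(n+(q+1)m)$ lying in the interval $[a,b]$ form a chain of positive successive differences whose total is $b - a < d$, so at least one of these differences is strictly less than $d$ and belongs to $\Delta(n+(q+1)m) \subseteq \Delta(M)$. This contradicts $d = \min \Delta(M)$, forcing $r = 0$ and hence $d \mid \delta$.

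For the second assertion, assume $M$ is finitely generated. I would invoke that the monoid congruence presenting $M$ is itself finitely generated: there is a finite collection of trade relations among the atoms through which any two factorizations of a common element can be connected by a finite chain of moves. Each such move alters factorization length by an integer of bounded absolute value, yielding a uniform upper bound on any successive gap in $\mathsf L(n)$ as $n$ ranges over $M$. Combined with the first part, which confines $\Delta(M)$ to the positive multiples of $d$ and hence to an arithmetic progression, this boundedness yields finiteness. The primary obstacle is the first part: extra lengths of $\mathsf L(n+(q+1)m)$ beyond the two progressions we produced could in principle redistribute the short gaps we found, but the observation that subdividing a gap of size less than $d$ can only produce pieces that are themselves of size less than $d$ dissolves this concern, so the extra flexibility only strengthens the contradiction.
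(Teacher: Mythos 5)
Your argument is correct and is essentially the standard proof of the cited result (the paper itself gives no proof, deferring to \cite[Proposition~1.4.4]{nonuniq}): the interleaving-progressions trick showing $\min\Delta(M)=\gcd\Delta(M)$ is exactly the classical one, and the finiteness claim follows from R\'edei's theorem as you indicate. The only step worth spelling out is the passage from ``each defining relation changes length by at most $C$'' to ``successive gaps are at most $C$'': given consecutive lengths $\ell_1<\ell_2$ in $\mathsf L(n)$, a chain of moves joining factorizations of those lengths has all intermediate lengths in $\mathsf L(n)$, hence none strictly between $\ell_1$ and $\ell_2$, so some single step must jump from a value $\le\ell_1$ to one $\ge\ell_2$, giving $\ell_2-\ell_1\le C$.
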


\begin{remark}\label{r:deltarealization}
Progress on Problem~\ref{pr:deltarealization} has come from finding specific subsets that can be realized as delta sets.  For example, every set of the form $\{d, 2d, \ldots, td\}$ is known to occur as the delta set of a numerical monoid~\cite{delta}.  Additionally, a family of numerical monoids whose delta sets are $\{d, td\}$ for any $t,d > 1$ is given~\cite{deltarealization}.  
\end{remark}

Recent investigation into Problem~\ref{pr:deltarealization} has greatly benefited from evidence produced by computer algebra systems.  That said, until very recently, there was no known algorithm to compute the delta set of an entire numerical monoid $S$.  Although the delta set $\Delta(n)$ of an element $n \in S$ can be computed by computing its set of factorizations $\mathsf Z(n)$, it is less straight forward to compute the entire set $\Delta(S)$, as this requires taking the union of the delta sets of all (infintely many) elements of $S$.  

Before discussing how this issue can be resolved (see Corollary~\ref{c:deltacompute}), we state Theorem~\ref{t:maxdelta}, which demonstrates that the maximal element $\max \Delta(S)$ in the delta set of a numerical monoid $S$ can be obtain by computing the delta sets of a special (computable) class of monoid elements, called \emph{Betti elements} (Definition~\ref{d:betti}).  

\begin{defn}\label{d:betti}
Let $S = \<n_1, n_2, \ldots, n_k\>$ be a numerical monoid and $n \in S$.  Consider the graph $G_n$ with vertex set given by its set of factorizations $\mathsf Z(n)$ and an edge connecting $\aa, \bb \in \mathsf Z(n)$ if $\aa$ and $\bb$ have disjoint support as vectors.  That is, $\aa=(a_1, \ldots, a_k)$ and $\bb = (b_1,\ldots,b_k)$ are adjacent in $G_n$ if for all $i$, $a_i$ and $b_i$ are never both non-zero.  An element $n \in S$ is called a \emph{Betti element} if $G_n$ is disconnected.   The \emph{set of Betti elements of $S$} is denoted by $\Betti(S)$.
\end{defn}

\begin{remark}\label{r:betti}
Every numerical monoid contains only finitely many Betti elements.  When $S = \<n_1,n_2\>$ is a numerical monoid with two irreducible elements, $n_1n_2$ is the unique Betti element.  In the case when $S = \<n_1, n_2, n_3\>$ has three irreducible elements, there are at most three Betti elements, each of which is an integer multiple of one of the generators of $S$ \cite[Section~9.3]{numerical}.  
\end{remark}

\begin{thm}[{\cite[Theorem~2.5]{deltabetti}}]\label{t:maxdelta}
Let $S$ be a numerical monoid.  The largest element of $\Delta(S)$ is achieved at a Betti element of $S$.  In other words, 
$$\max \Delta(S) = \max\{d : d \in \Delta(b) \text{ for } b \in \Betti(S)\}.$$
\end{thm}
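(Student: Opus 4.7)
The plan is to establish $\max \Delta(S) \leq \max\{d : d \in \Delta(b),\ b \in \Betti(S)\}$, since the reverse inequality is immediate (Betti elements belong to $S$). Setting $d = \max \Delta(S)$ and choosing $n \in S$ minimal as an integer with $d \in \Delta(n)$, I would show that $n$ is itself a Betti element, which suffices. The entire argument is driven by the maximality of $d$, which restricts how lengths of related factorizations can behave.

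For the contrapositive, assume $G_n$ is connected. Fix $\aa, \bb \in \mathsf Z(n)$ with $|\aa|$ and $|\bb| = |\aa| + d$ consecutive in $\mathsf L(n)$, and take a path $\aa = \aa_0, \ldots, \aa_r = \bb$ in $G_n$ so that consecutive $\aa_i, \aa_{i+1}$ share some generator $n_{j_i}$. Because $|\aa|$ and $|\bb|$ are consecutive in $\mathsf L(n)$, every length $|\aa_i|$ satisfies $|\aa_i| \leq |\aa|$ or $|\aa_i| \geq |\bb|$, and so some crossing index $i^\ast$ satisfies $|\aa_{i^\ast}| \leq |\aa|$ while $|\aa_{i^\ast+1}| \geq |\bb|$. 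Letting $j^\ast = j_{i^\ast}$ and $n' = n - n_{j^\ast} < n$, the vectors $\aa_{i^\ast} - \ee_{j^\ast}$ and $\aa_{i^\ast+1} - \ee_{j^\ast}$ are factorizations of $n'$ of lengths at most $|\aa| - 1$ and at least $|\bb| - 1$, respectively.

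The heart of the proof is now to show $d \in \Delta(n')$, contradicting the minimality of $n$. Let $\ell_{\max}$ be the largest element of $\mathsf L(n')$ with $\ell_{\max} \leq |\aa| - 1$ and $\ell_{\min}$ the smallest with $\ell_{\min} \geq |\bb| - 1$; both exist by the two factorizations above. Any intermediate $\ell' \in \mathsf L(n')$ with $\ell_{\max} < \ell' < \ell_{\min}$ would lift (by adding $n_{j^\ast}$) to a length in $\mathsf L(n)$ strictly between $|\aa|$ and $|\bb|$, contradicting consecutivity; hence $\ell_{\min} - \ell_{\max} \in \Delta(n')$. Since $\ell_{\min} - \ell_{\max} \geq |\bb| - |\aa| = d$ and $\Delta(n') \subseteq \Delta(S)$ is bounded above by $d$, this difference equals $d$ exactly. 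The main subtlety is the dual use of the maximality of $d$: once to force the length trajectory along the path to skip the open interval $(|\aa|, |\bb|)$, and once to pin the induced gap in $\mathsf L(n')$ to be exactly $d$ rather than larger --- without either, one only recovers two lengths in $\mathsf L(n')$ that are far apart, not an honest element of $\Delta(n')$.
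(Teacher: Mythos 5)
Your argument is correct, and it is worth noting at the outset that the paper itself supplies no proof of Theorem~\ref{t:maxdelta} --- it defers entirely to \cite[Theorem~2.5]{deltabetti} --- so the relevant comparison is with that reference. (One caveat on conventions: Definition~\ref{d:betti} as literally printed puts an edge of $G_n$ between factorizations with \emph{disjoint} support, which contradicts Example~\ref{e:bettidelta}; the intended graph, which you correctly use, joins factorizations sharing a common atom, and Betti elements are those $n$ for which that graph is disconnected.) Your proof is a minimal-counterexample descent: assuming the minimal $n$ with $d = \max\Delta(S) \in \Delta(n)$ is not Betti, you walk a path in $G_n$ across the length gap, peel off the shared atom $n_{j^*}$ at the crossing edge, and show that a gap of size exactly $d$ reappears in $\Delta(n - n_{j^*})$, using maximality of $d$ twice exactly as you describe. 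The proof in \cite{deltabetti} instead routes through minimal presentations: any two factorizations of $n$ are joined by a chain of trades by relators supported on Betti elements, the crossing trade $(\alpha,\beta)$ satisfies $|\beta|-|\alpha|\ge d$, and splicing factorizations of the relevant Betti element of intermediate length into the chain forces some pair of consecutive lengths of that Betti element to differ by at least $d$. Your route is more elementary --- it needs only the definition of $G_n$ and well-ordering of $\NN$, no presentation theory --- at the cost of being less constructive, since it identifies the witnessing Betti element only as ``the minimal counterexample.'' The one step you should spell out is the lifting claim: an intermediate $\ell' \in \mathsf L(n')$ with $\ell_{\max} < \ell' < \ell_{\min}$ must first be shown to satisfy $|\aa| \le \ell' \le |\bb| - 2$ (this is exactly where the maximality of $\ell_{\max}$ and minimality of $\ell_{\min}$ enter, ruling out $\ell' \le |\aa|-1$ and $\ell' \ge |\bb|-1$) before one can conclude that $\ell'+1$ lands strictly inside $(|\aa|,|\bb|)$.
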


Betti elements are discussed in more depth in Section~\ref{s:catenary}, as they are useful in finding maximal and minimal catenary degrees achieved in a numerical monoid (see Definitions~\ref{d:catenarydegree} and~\ref{d:catenaryset}).  

\begin{example}\label{e:bettidelta}
As stated in Remark~\ref{r:betti}, for the numerical monoid $S = \<n_1, n_2\>$ with two irreducible elements, $n_1n_2$ is the unique Betti element.  In fact, it has precisely two factorizations, namely $(n_2,0)$ and $(0,n_1)$.  Since their difference in length equals $n_2 - n_1$, we have $\max \Delta(S) = n_2 - n_1$.  Of course, by Theorem~\ref{t:deltaarithmetic}, $\Delta(S) = \{n_2 - n_1\}.$  
\end{example}

Theorem~\ref{t:deltaperiodic} states that delta sets for numerical monoids are eventually periodic with period dividing $\lcm(n_1,n_k)$.  The value of $N_S$ given in~\cite{compasympdelta} is explicitly computed, albeit cumbersome to state.  Most impressive about Theorem~\ref{t:deltaperiodic} is that it provides a concrete upper bound for the beginning of the periodic behavior, and thus a finite set of monoid elements the union of whose delta sets equals the delta set of the entire numerical monoid.  As a consequence, Theorem~\ref{t:deltaperiodic} also yields the first known algorithm to compute the delta set of any numerical monoid; see Corollary~\ref{c:deltacompute}.  

\begin{thm}[{\cite[Theorem~1]{compasympdelta}}]\label{t:deltaperiodic}
For $S = \<n_1, \ldots, n_k\>$ numerical, there exists a number $N_S$ such that for any $n \in S$ with $n \geq N_S$, $\Delta(n) = \Delta(n + \lcm(n_1,n_k))$.  
\end{thm}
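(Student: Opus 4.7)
My plan is to prove the stronger recursive identity
\[
\mathsf L(n+L) = \bigl(\mathsf L(n) + L/n_1\bigr) \cup \bigl(\mathsf L(n) + L/n_k\bigr)
\]
for all $n \geq N_S$, where $L = \lcm(n_1, n_k)$, and then extract the delta-set periodicity from it. The forward inclusion is immediate from two natural shift maps $\phi_1, \phi_k : \mathsf Z_S(n) \hookrightarrow \mathsf Z_S(n+L)$ defined by $\phi_1(\aa) = \aa + (L/n_1)\ee_1$ and $\phi_k(\aa) = \aa + (L/n_k)\ee_k$; these are well-defined because $n_1 \mid L$ and $n_k \mid L$, and they shift factorization length by $L/n_1$ and $L/n_k$, respectively.

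The crux of the argument is the reverse inclusion. Given any $\bb \in \mathsf Z_S(n+L)$, if $b_1 \geq L/n_1$ then $\bb - (L/n_1)\ee_1 \in \mathsf Z_S(n)$ has length $|\bb| - L/n_1$, and symmetrically when $b_k \geq L/n_k$. The difficulty is the ``bad'' case in which both $b_1 < L/n_1$ and $b_k < L/n_k$, which can occur for arbitrarily large $n$ since the middle coordinates are unconstrained by these inequalities. I would handle bad factorizations by trading: using the identity $(L/n_1)n_1 = (L/n_k)n_k$ together with minimal-presentation relations expressing the middle generators in terms of $n_1$ and $n_k$, one produces from $\bb$ a length-matching factorization $\bb'$ of $n+L$ with $b'_1 \geq L/n_1$ or $b'_k \geq L/n_k$, at which point the direct construction applies. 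The threshold $N_S$ is calibrated so that every bad factorization of an element exceeding $N_S$ carries enough middle-generator mass to absorb such a trade; this is the delicate part of the argument and is where the explicit bound from \cite{compasympdelta} originates. An Ap\'ery-set analysis with respect to $n_1$ and $n_k$ is the natural tool here, since Ap\'ery sets parameterize precisely the minimal trades available between one generator and the others.

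Once the recursive identity is established, I would deduce $\Delta(n) = \Delta(n+L)$ by analyzing the union of the two shifted copies. The envelope of the length set evolves predictably: taking maxes and mins across the identity (and using that $L/n_1 > L/n_k$) gives $\max \mathsf L(n+L) = \max \mathsf L(n) + L/n_1$ and $\min \mathsf L(n+L) = \min \mathsf L(n) + L/n_k$. Combined with the structural fact that $\mathsf L(n) + L/n_1$ and $\mathsf L(n) + L/n_k$ overlap substantially in their middle regions for $n \geq N_S$ (their common span exceeds the offset $L/n_1 - L/n_k$ once $\max \mathsf L(n) - \min \mathsf L(n)$ is sufficiently large), the union reproduces the gap pattern of $\mathsf L(n)$. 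Making this step precise --- in particular, ensuring that no new gaps are introduced by the interleaving and no existing gaps are lost --- is the remaining obstacle, and it again benefits from the same Ap\'ery-set bookkeeping used in the trading argument.
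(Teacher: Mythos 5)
The survey does not actually prove this result; it cites \cite[Theorem~1]{compasympdelta}, and the two-stage strategy you outline --- first the recursion $\mathsf L(n+L) = (\mathsf L(n) + L/n_1) \cup (\mathsf L(n) + L/n_k)$ for $n \ge N_S$, then a deduction of $\Delta(n+L) = \Delta(n)$ from it --- is indeed the strategy of that proof and of its predecessor \cite{deltaperiodic}. But as written your argument has two genuine gaps, both of which you flag yourself, and neither is routine. First, the reverse inclusion of the recursion is the entire technical content of the theorem, and the mechanism you propose for it is suspect: relations coming from a minimal presentation do not preserve factorization length in general, so ``trading'' middle-generator mass into $n_1$'s or $n_k$'s while keeping $|\bb'| = |\bb|$ is not something a single relation supplies. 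What must actually be shown is that for every $\ell \in \mathsf L(n+L)$, either $\ell - L/n_1$ or $\ell - L/n_k$ lies in $\mathsf L(n)$, and calibrating $N_S$ so that this holds is precisely where all the work (and the explicit bound of \cite{compasympdelta}) lives; saying that an Ap\'ery-set analysis ``is the natural tool'' is not a proof.

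Second, and more seriously, the recursion alone does not imply $\Delta(n+L) = \Delta(n)$, and the justification you offer --- that the two translates overlap because the span of $\mathsf L(n)$ exceeds the offset $L/n_1 - L/n_k$ --- fails as a general principle. Take $A = \{0,1,\dots,50\} \cup \{100\}$ with shifts $c=1$ and $C=4$: then $(A+c)\cup(A+C) = \{1,\dots,54\}\cup\{101,104\}$, whose set of successive differences is $\{1,3,47\}$, while that of $A$ is $\{1,50\}$ --- a gap is destroyed and two new ones created even though the span ($100$) dwarfs the offset ($3$). So to close this step you need structural information about where the large gaps of $\mathsf L(n)$ can sit (roughly, that they are confined to top and bottom fringes that translate rigidly by $L/n_1$ and $L/n_k$ respectively), or an iteration/stabilization argument applied to the map $A \mapsto (A+L/n_1)\cup(A+L/n_k)$; either way this is additional substance the proposal does not supply. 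The skeleton is the right one, but both load-bearing steps are missing.
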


\begin{cor}\label{c:deltacompute}
Given a numerical monoid $S = \<n_1, n_2, \ldots, n_k\>$, we have
$$\Delta(S) = \!\!\! \bigcup_{n \in S \cap [0,N]} \!\!\! \Delta(n),$$
where $N = N_S + \lcm(n_1,n_k)$.  
\end{cor}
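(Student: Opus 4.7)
The plan is to prove the two containments of the stated equality separately. The inclusion $\bigcup_{n \in S \cap [0,N]} \Delta(n) \subseteq \Delta(S)$ follows immediately from the definition $\Delta(S) = \bigcup_{n \in S} \Delta(n)$, so all the content lies in the reverse inclusion: I must show that every length-gap occurring anywhere in $S$ already appears at an element of $S$ bounded by $N$.

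For the reverse inclusion, I would fix $d \in \Delta(S)$, choose any $m \in S$ with $d \in \Delta(m)$, and reduce $m$ to a representative in $S \cap [0,N]$ with the same delta set using the periodicity in Theorem~\ref{t:deltaperiodic}. Setting $L = \lcm(n_1,n_k)$, if $m \leq N$ then $n = m$ works trivially; otherwise the natural candidate is $n = m - qL$ with $q = \lfloor (m - N_S)/L \rfloor \ge 1$, for which $N_S \leq n < N_S + L = N$ by construction. A straightforward induction on $j$, at each step applying Theorem~\ref{t:deltaperiodic} to $n + jL$, would then yield
$$\Delta(n) = \Delta(n+L) = \cdots = \Delta(n + qL) = \Delta(m),$$
so $d \in \Delta(n)$ with $n \in S \cap [0,N]$, as required.

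The main (and essentially only) obstacle is ensuring that each intermediate element $n + jL$ with $0 \leq j \leq q$ actually lies in $S$, so that Theorem~\ref{t:deltaperiodic} legitimately applies at every step of the induction. I would resolve this by appealing to the Frobenius number (Definition~\ref{d:frobenius}): since $F(S)$ is finite, every integer strictly greater than $F(S)$ lies in $S$, so by replacing the $N_S$ supplied by Theorem~\ref{t:deltaperiodic} with $\max(N_S, F(S)+1)$ if necessary (which preserves the periodicity conclusion), one has $n + jL \geq n \geq N_S > F(S)$, and hence $n + jL \in S$ throughout. Once this bookkeeping is in place, the corollary drops out formally from the periodicity statement.
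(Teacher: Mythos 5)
Your proof is correct and is essentially the argument the paper intends: Corollary~\ref{c:deltacompute} is presented as an immediate consequence of Theorem~\ref{t:deltaperiodic}, and your descent from an arbitrary $m$ by multiples of $\lcm(n_1,n_k)$ to a representative in $[N_S, N_S+\lcm(n_1,n_k))$ is the natural way to make that precise. Your observation that the intermediate elements $n+jL$ must themselves lie in $S$ for the theorem to apply at each step is a legitimate point the paper glosses over, and your fix of enlarging $N_S$ to exceed $F(S)$ if necessary is harmless (and in fact unnecessary for the explicit $N_S$ of the cited reference, which already exceeds the Frobenius number).
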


\begin{example}\label{e:deltaperiodic}
Let $S = \<17,33,53,71\>$ as in Example~\ref{e:deltanumerical}.  Readily visible in Figure~\ref{fig:deltanumerical} is the eventual periodicity of $\Delta:S \to 2^\NN$ described by Theorem~\ref{t:deltaperiodic}.  In particular, Theorem~\ref{t:deltaperiodic} ensures that $\Delta(n) = \Delta(n + \lcm(17,71))$ for all $n \ge N_S = 6461$.  Computing $\Delta(n)$ for smaller values of $n$ demonstrates that, in~fact, $\Delta(n) = \Delta(n + 17)$ holds for all $n \ge 319$.  
\end{example}

\begin{remark}\label{r:deltacompute}
While Corollary~\ref{c:deltacompute} provides the delta set of the entire numerical monoid by taking the union of only finite many element-wise delta sets, the number of elements $n$ for which $\Delta(n)$ needs to be computed becomes prohibitively large, especially when larger generators are used.  Aside from recent improvements in the special case when $S$ has 3 minimal generators \cite{deltadim3}, few refinements are known for the set of elements whose delta sets must be computed.  Alternatively, we can utilize a dynamic algorithm for computing length sets to dynamically compute $\Delta(n)$ and $\Delta(S)$; see Remark~\ref{r:deltadynamic}.
\end{remark}

\begin{remark}\label{r:deltadynamic}
In the numerical monoid $S = \<6,9,20\>$, it is possible to compute $\mathsf L(60)$ directly from $\mathsf L(40)$, $\mathsf L(51)$, and $\mathsf L(54)$, without computing $\mathsf Z(60)$.  The key observation is that one can produce a factorization of $60$ from a factorization of $40$, $51$ or $54$ by adding a single irreducible element, and this opereration always increases factorization length by exactly one.  Consider the following computation:
$$\begin{array}{rcl}
\mathsf L(60) &=& \left(\mathsf L(40) + 1\right) \,\cup\, \left(\mathsf L(51) + 1\right) \,\cup\, \left(\mathsf L(54) + 1\right) \\
&=& \{3\} \,\cup\, \{7,8,9\} \,\cup\, \{7,8,9,10\} \\
&=& \{3,7,8,9,10\}.
\end{array}$$
This provides an inductive (dynamic) method to compute all the length sets necessary to employ Corollary~\ref{c:deltacompute} without having to compute any factorization sets, and the resulting algorithm is significantly faster than one dependent on factorizations.  Indeed, the set $\mathsf Z(n)$ of factorizations of $n$ grow large very quickly~\cite{factorasymp}, whereas $|\mathsf L(n)|$ grows linearly in $n$ \cite[Theorem~4.3]{elastsets}.  For a more detailed analysis, see~\cite[Section~3]{dynamicalg}.  
\end{remark}

\section{The catenary degree}\label{s:catenary}

Many of the factorization invariants discussed above (e.g., length sets, delta sets, and elasticity) measure only information about the lengths of factorizations of an element.  While helpful in measuring the non-uniqueness of factorizations of an element, these invariants lose certain information when passing from the set of factorizations to the set of lengths (Theorem~\ref{t:setsoflength}).  The \emph{catenary degree}, which we describe in this section, is derived directly from factorizations and is not uniquely determined by sets of factorization lengths.  

\begin{defn}\label{d:catenarydistance}
Fix a numerical monoid $S = \<n_1, \ldots, n_k\>$, and fix an element $n \in S$.  For $\aa, \bb \in \mathsf Z(n)$, the \emph{greatest common divisor of $\aa$ and $\bb$} is given by 
$$\gcd(\aa,\bb) = (\min(a_1,b_1), \ldots, \min(a_r,b_r)) \in \NN^r,$$
and the \emph{distance between $\aa$ and $\bb$} (or the \emph{weight of $(\aa,\bb)$}) is given by 
$$d(\aa,\bb) = \max(|\aa - \gcd(\aa,\bb)|,|\bb - \gcd(\aa,\bb)|).$$
\end{defn}

The function $d: \mathsf Z(n) \times \mathsf Z(n) \to \mathbb N$ measures how many irreducible elements two factorizations do not share in common.  As implied by the notation, $d(-,-)$ is a metric on the set $\mathsf Z(n)$ \cite[Proposition~1.2.5.3]{nonuniq}.  In fact, it is common to represent $\mathsf Z(n)$ as the vertices of a complete graph, where the edge between factorizations $\aa$ and $\bb$ is decorated with $d(\aa,\bb)$.  

Below, we use the metric $d$ to define the \emph{catenary degree}, which measures the ``overall'' distance between distinct factorizations of a given monoid element.  

\begin{defn}\label{d:catenarydegree}
Given $\aa, \bb \in \mathsf Z(n)$ and $N \ge 1$, an \emph{$N$-chain from $\aa$ to $\bb$} is a sequence $\aa_1, \ldots, \aa_r \in \mathsf Z(n)$ of factorizations of $n$ such that (i) $\aa_1 = \aa$, (ii) $\aa_r = \bb$, and (iii) $d(\aa_{i-1},\aa_i) \le N$ for all $i \le r$.  The \emph{catenary degree of $n$}, denoted $\mathsf c(n)$, is the smallest $N \in \NN$ such that there exists an $N$-chain between any two factorizations of $n$.  
\end{defn}

With respect to the decorated complete graph representing $\mathsf Z(n)$, the catenary degree $\mathsf c(n)$ can be computed by repeatedly removing edges of maximal distance until the graph becomes disconnected.  The distance decorating the last removed edge is equal to $\mathsf c(n)$.  Example~\ref{e:catenarydegree} describes such a computation.

\begin{example}\label{e:catenarydegree}
Consider the numerical monoid $S = \<11,36,39\>$.  The left-hand picture in Figure~\ref{fig:catenarydegree} depicts the factorizations of $450 \in S$ along with all pairwise distances.  There exists a $16$-chain between any two factorizations of $450$; one such $16$-chain between $(6,2,8)$ and $(24,3,2)$ is depicted with bold red edges.  Since every $16$-chain between these factorizations contains the edge labeled 16, we have $\mathsf c(450) = 16$.  

This can also be computed in a different way.  In the right-hand picture of Figure~\ref{fig:catenarydegree}, only distances of at most $16$ are depicted, and the resulting graph is connected.  Removing the edge labeled $16$ yields a disconnected graph, so we again conclude $\mathsf c(450) = 16$.  
\end{example}

\begin{figure}
\includegraphics[width=2.5in]{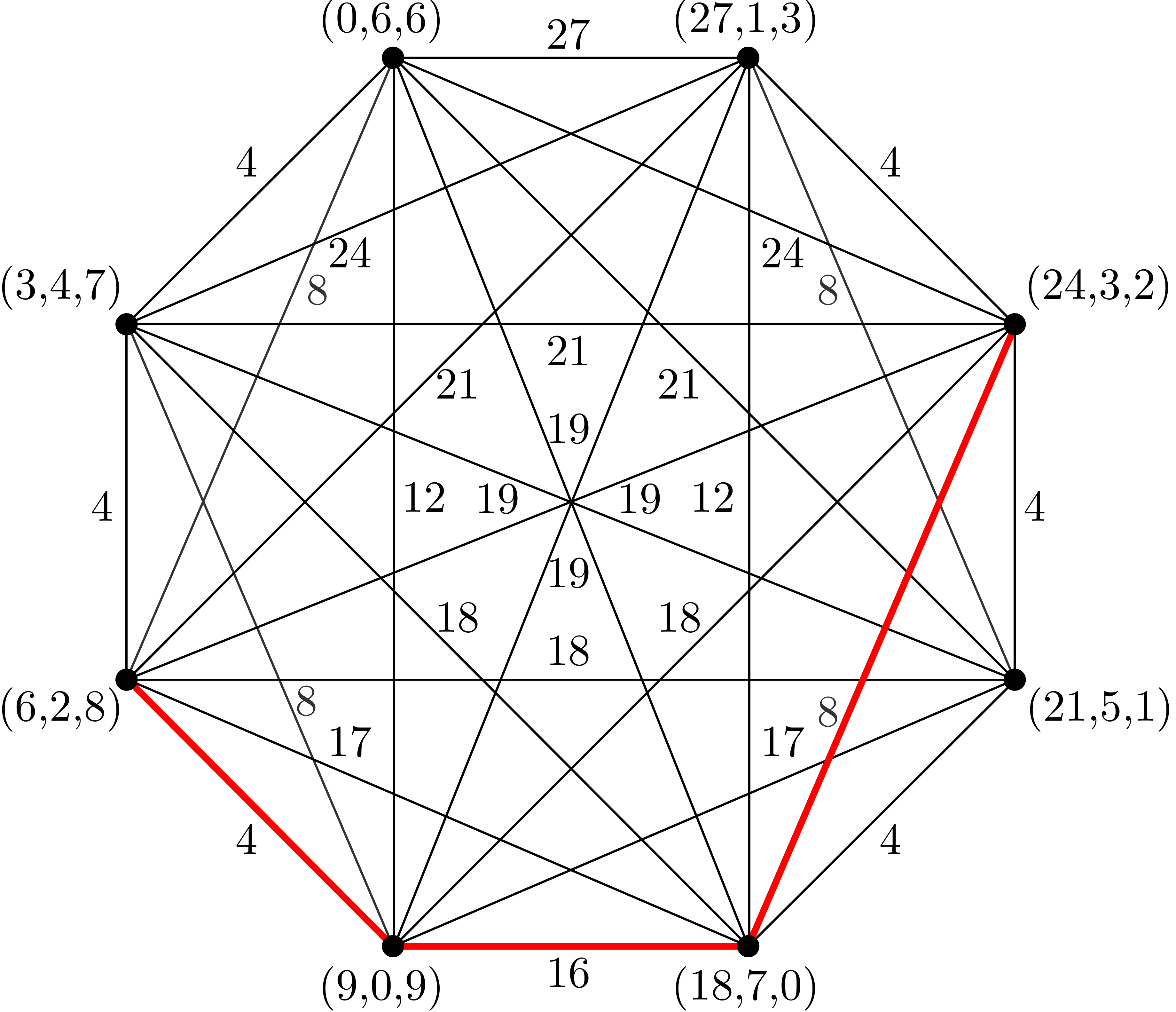}
\hspace{0.5in}
\includegraphics[width=2.5in]{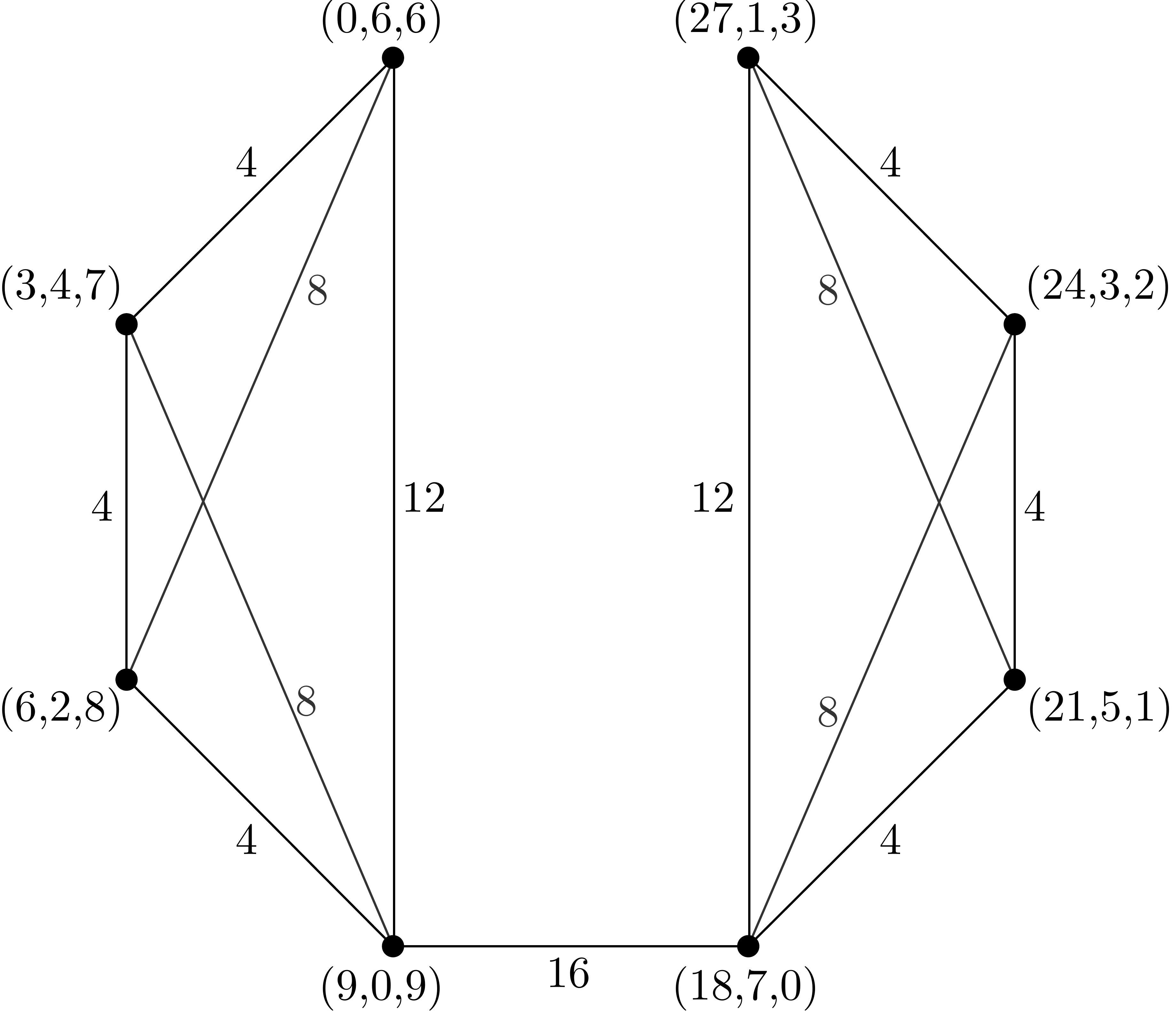}
\medskip
\caption{Computing the catenary degree of $450 \in S = \<11,36,39\>$, as in Example~\ref{e:catenarydegree}.  Each vertex is labeled with an element of $\mathsf Z(450)$, and each edge is labeled with the distance between the factorizations at either end.  The diagram on the left depicts all edges, and the diagram on the right includes only those edges labeled at most $\mathsf c(450) = 16$.  Both graphics were created using the computer algebra system \texttt{SAGE} \cite{sage}.}  
\label{fig:catenarydegree}
\end{figure}

While the catenary degree of an individual element contains information about its different factorizations, the range of catenary degrees achieved by all elements in a monoid has also been studied.  Definition~\ref{d:catenaryset} gives two ways to measure how the catenary degree varies throughout a numerical monoid.

\begin{defn}\label{d:catenaryset}
The \emph{set of catenary degrees of $S$} is the set $\mathsf C(S) = \{\mathsf c(m) : m \in S\}$, and the  \emph{catenary degree of $S$} is the maximal catenary degree: $\mathsf c(S) = \max \mathsf C(S)$.
\end{defn}

By far, the more well-understood of these two invariants is $\mathsf c(S)$, which has been computed for several classes of numerical monoids.  Theorem~\ref{t:catenarycomputation} gives a summary of some known results (additionally, see Theorem~\ref{t:maxcatdegree}).  

\begin{thm}[{\cite{compseqs,omidali}}]\label{t:catenarycomputation}
Fix a numerical monoid $S$.  
\begin{enumerate}[(a)]
\item If $S = \<n_1, n_2\>$, then $\mathsf c(S) = n_2$.  
\item If $S = \<a, ah + d, \ldots, ah + kd\>$ for positive integers $a, h, k, d$ with $\gcd(a,d) = 1$ and $k < a$, then $\mathsf c(S) = \lceil \frac{a}{k} \rceil h + d$.  
\item If $S = \<(\prod_{j=1}^i b_j)(\prod_{j=i+1}^k a_j) : 0 \le i \le k\>$, where $2 \le a_i < b_i$ and $\gcd(a_i,b_j) = 1$ for all $i \ne j \le k$, then $\mathsf c(S) = \max\{b_1, \ldots, b_k\}$.  
\end{enumerate}
\end{thm}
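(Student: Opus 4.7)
My plan is to handle all three parts with a uniform strategy: leveraging the standard fact (analogous to Theorem~\ref{t:maxdelta}) that the catenary degree of a finitely generated monoid is attained on a Betti element, so $\mathsf c(S) = \max\{\mathsf c(b) : b \in \Betti(S)\}$. In each case I identify the Betti elements, compute the catenary degree from the factorization graph of each, and take the maximum. Part~(a) is then immediate: by Remark~\ref{r:betti}, the unique Betti element of $\<n_1, n_2\>$ is $n_1 n_2$, whose only factorizations $(n_2, 0)$ and $(0, n_1)$ have disjoint support, so the distance between them is $\max(n_1, n_2) = n_2$. Since these are the only two factorizations, no chain of smaller step size connects them, giving $\mathsf c(n_1 n_2) = n_2$, and hence $\mathsf c(S) = n_2$.

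For (b), I would identify the Betti elements of $S = \<a, ah+d, \ldots, ah+kd\>$ explicitly. The coprimality $\gcd(a,d) = 1$ together with the bound $k < a$ constrains the minimal relations among generators to arise from short sums of a few of the largest generators balancing a multiple of $a$. The key observation is that $\lceil a/k \rceil$ copies of $ah+kd$ produce a quantity whose residue modulo $a$ is controlled, so the most extreme trade exchanges $\lceil a/k \rceil$ copies of a large generator for $h$ copies of $a$ together with a residual adjustment of size $d$; this produces a factorization-graph edge of weight $\lceil a/k \rceil h + d$. I would then verify that this trade realizes the catenary degree at its Betti element and that no other Betti element yields a larger value, following the detailed computation in~\cite{omidali}.

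For (c), the generators $n_i = (\prod_{j=1}^i b_j)(\prod_{j=i+1}^k a_j)$ satisfy the clean identities $b_i n_{i-1} = a_i n_i$ for each $i = 1, \ldots, k$, since both sides equal $(\prod_{j=1}^i b_j)(\prod_{j=i}^k a_j)$. Each such identity yields a Betti element with a disjoint-support factorization pair of distance $\max(a_i, b_i) = b_i$, so $\mathsf c(b_i n_{i-1}) = b_i$, and hence $\mathsf c(S) \ge \max\{b_1, \ldots, b_k\}$. The main obstacle is proving the reverse inequality, namely that these $k$ relations form a minimal presentation, so that the $b_i n_{i-1}$ exhaust the Betti elements (or at least any other Betti element has catenary degree bounded by some $b_j$). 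Here I would exploit the hypothesis $\gcd(a_i, b_j) = 1$ for $i \ne j$, which prevents any additional shortcuts among generators beyond the given identities: concretely, a minimal syzygy among the $n_i$ that does not reduce to a combination of the stated relations would force a nontrivial common factor across the $a_i$ and $b_j$, contradicting coprimality, so that $\max\{b_1, \ldots, b_k\}$ is in fact the catenary degree of $S$.
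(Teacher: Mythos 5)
The paper itself offers no proof of this theorem --- the bracketed citation to \cite{compseqs,omidali} \emph{is} the proof --- so the only meaningful comparison is with those sources, and they do proceed essentially as you propose: reduce to Betti elements via Theorem~\ref{t:maxcatdegree} and compute the catenary degree of each. Your part~(a) is complete and correct: $n_1n_2$ is the unique Betti element, its only two factorizations are $(n_2,0)$ and $(0,n_1)$, and their distance is $n_2$.

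Parts~(b) and~(c), however, contain genuine gaps. In~(b), the extremal trade is misdescribed: writing $q = \lceil a/k\rceil$, when $k \mid a$ the relation is $q(ah+kd) = (qh+d)\,a$, so $q$ copies of the largest generator are exchanged for $qh+d$ copies of $a$ --- not for ``$h$ copies of $a$ together with a residual adjustment of size $d$,'' which would yield an edge of weight on the order of $h+d$ rather than $qh+d$. When $k \nmid a$ the balancing factorization also involves a middle generator $ah+rd$ and the length count changes, so one must separately rule out a larger weight at some other Betti element; you explicitly defer this to \cite{omidali}, which means this part is cited rather than proved. In~(c), the identities $b_i n_{i-1} = a_i n_i$ are correct and produce the right candidate Betti elements, but two steps are missing. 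First, even the lower bound $\mathsf c(b_i n_{i-1}) \ge b_i$ requires showing that the two factorizations coming from this identity cannot be joined by a chain of steps of weight less than $b_i$, which demands control of \emph{all} factorizations of $b_i n_{i-1}$, not just these two. Second, the upper bound needs an actual proof that these $k$ relations form a minimal presentation, equivalently that the $b_i n_{i-1}$ exhaust $\Betti(S)$; your coprimality argument --- that an extra minimal syzygy ``would force a nontrivial common factor'' --- is a heuristic rather than a derivation, and establishing it is precisely the technical content of \cite{compseqs}. So the skeleton is the right one, but as written only part~(a) is a proof.
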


While a closed form for $\mathsf c(n)$ for every element $n$ is a numerical monoid is rare, such a formula has been found for numerical monoids generated by an arithmetic sequence.

\begin{thm}[{\cite[Theorem~3.1]{catenaryarithseq}}]\label{t:catnaryarithmetic}
Suppose $S = \<a, a + d, \ldots, a + kd\>$ for positive integers $a, k, d$ with $\gcd(a,d) = 1$ and $k < a$.  Then 
$$\mathsf c(n) = \left\{\begin{array}{ll}
0 & |\mathsf Z(n)| = 1 \\
2 & |\mathsf L(n)| = 1 \text{ and } |\mathsf Z(n)| > 1 \\
\mathsf c(S) & |\mathsf L(n)| > 1
\end{array}\right.$$
for each nonzero $n \in S$, where $\mathsf c(S) = \lceil \frac{a}{k} \rceil + d$.  In particular, $\mathsf C(S) = \{0,2,\mathsf c(S)\}$.  
\end{thm}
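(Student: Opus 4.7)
The plan is to split into three cases on the sizes of $\mathsf Z(n)$ and $\mathsf L(n)$, with the main technical work in the lower bound of Case~3. For Case~1, where $|\mathsf Z(n)| = 1$, the definition of $\mathsf c(n)$ immediately forces $\mathsf c(n) = 0$, since there are no pairs of distinct factorizations to chain. For Case~2, where $|\mathsf L(n)| = 1$ and $|\mathsf Z(n)| > 1$, the lower bound $\mathsf c(n) \geq 2$ follows because any two distinct same-length factorizations must differ in at least two coordinates (a single-coordinate change would alter both the value and the length). For the upper bound $\mathsf c(n) \leq 2$, the key observation is that in $S = \<a, a+d, \ldots, a+kd\>$, whenever $0 \leq i, j, i', j' \leq k$ satisfy $i + j = i' + j'$, the identity $(a+id) + (a+jd) = (a+i'd) + (a+j'd)$ defines a length-preserving trade of distance exactly $2$. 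An iterative procedure that repeatedly applies such ``balanced swaps'' to move the minimum support index of $\aa$ toward that of $\bb$ eventually transforms $\aa$ into $\bb$ through a sequence of distance-$2$ steps.

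The heart of the proof is Case~3, where $|\mathsf L(n)| > 1$. The upper bound $\mathsf c(n) \leq \mathsf c(S)$ is immediate from the definition $\mathsf c(S) = \max \mathsf C(S)$. For the reverse inequality, fix $\aa, \bb \in \mathsf Z(n)$ with $|\aa| \neq |\bb|$ and any chain $\aa = \aa_0, \aa_1, \ldots, \aa_r = \bb$. Since the endpoint lengths differ, some step $\aa_{i-1} \to \aa_i$ must change length; setting $\mathbf u = \aa_{i-1} - \gcd(\aa_{i-1}, \aa_i)$ and $\mathbf v = \aa_i - \gcd(\aa_{i-1}, \aa_i)$, we obtain vectors with disjoint support satisfying $\sum_j \mathbf u_j (a+jd) = \sum_j \mathbf v_j (a+jd)$ and, without loss of generality, $|\mathbf v| > |\mathbf u|$. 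Expanding this equality yields
$$a(|\mathbf v| - |\mathbf u|) = d\Big(\sum_j j\mathbf u_j - \sum_j j\mathbf v_j\Big),$$
and since $\gcd(a, d) = 1$, there is an integer $m \geq 1$ with $|\mathbf v| - |\mathbf u| = md$ and $\sum_j j\mathbf u_j - \sum_j j\mathbf v_j = ma$. Combining $\sum_j j\mathbf u_j \leq k|\mathbf u|$ with $\sum_j j\mathbf v_j \geq 0$ yields $ma \leq k|\mathbf u|$, hence $|\mathbf u| \geq \lceil a/k \rceil$ and $d(\aa_{i-1}, \aa_i) = |\mathbf v| \geq \lceil a/k \rceil + d = \mathsf c(S)$, where the closed form for $\mathsf c(S)$ comes from Theorem~\ref{t:catenarycomputation}(b). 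This forces $\mathsf c(n) \geq \mathsf c(S)$.

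The hardest step will be rigorously establishing the $2$-chain claim in Case~2: verifying that balanced swaps generate \emph{all} length-preserving trades among factorizations of $n$. The iterative procedure requires that the intermediate indices produced at each swap lie within $\{0, 1, \ldots, k\}$, which follows from the structure of same-length factorizations in an arithmetic generating sequence (if the minimum index of $\aa$'s support lies strictly below that of $\bb$, the equality of weighted index sums forces an index of $\aa$'s support to lie strictly above $\bb$'s minimum, leaving room for an intermediate swap), but the bookkeeping and the choice of which swap to apply must be handled carefully to ensure monotone termination. Once the three cases are complete, the final claim $\mathsf C(S) = \{0, 2, \mathsf c(S)\}$ follows by taking the union of the three achieved values, each realized by a suitable monoid element: an irreducible gives $0$, a length-preserving Betti element gives $2$, and any element with at least two factorization lengths gives $\mathsf c(S)$.
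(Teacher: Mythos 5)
The paper gives no proof of this theorem---it only cites \cite[Theorem~3.1]{catenaryarithseq}---so there is nothing internal to compare against; your outline is a correct reconstruction of the standard argument, and in particular the Case~3 lower bound (extracting $|\mathbf v|-|\mathbf u|=md$ and $\sum_j j\mathbf u_j-\sum_j j\mathbf v_j=ma$ from $\gcd(a,d)=1$, then bounding $|\mathbf u|\ge\lceil a/k\rceil$) is exactly the right computation, as is the balanced-swap connectivity argument for Case~2. Two small caveats: your upper bound $\mathsf c(n)\le\lceil a/k\rceil+d$ is not proved but imported from Theorem~\ref{t:catenarycomputation}(b) (acceptable here, since that result is established independently and stated earlier in the survey, but it is where the remaining chain-construction work lives); and the claim that $2$ is always realized requires $k\ge 2$, since for $k=1$ distinct factorizations necessarily have distinct lengths and $\mathsf C(S)=\{0,\mathsf c(S)\}$---an edge case the theorem statement itself glosses over.
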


While it remains a lofty goal to give a closed form for the catenary degree of individual elements in the general setting, the set of catenary degrees may prove to be more approachable.  Problem~\ref{pr:catenarysets} is, in many ways, an analog of Problem~\ref{pr:deltarealization} for the set of catenary degrees.  

\begin{prob}\label{pr:catenarysets}
Determine which finite sets equal $\mathsf C(S)$ for some numerical monoid~$S$.  
\end{prob}

Many of the arguments that compute the maximal catenary degree $\mathsf c(S)$ for a finitely generated monoid $S$ focus on the Betti elements of $S$ (Definition~\ref{d:betti}).  The factorizations of Betti elements contain enough information about $\mathsf Z(S)$ to give sharp bounds on the catenary degrees occuring in $\mathsf C(S)$ (Theorem~\ref{t:maxcatdegree}), and can be readily computed using the \texttt{GAP} package \texttt{numericalsgps} \cite{numericalsgpsgap}.  

\begin{thm}[{\cite{catenarytamefingen,mincatdeg}}]\label{t:maxcatdegree}
For any finitely generated monoid $S$, we have 
$$\begin{array}{r@{\,\,}c@{\,\,}l}
\mathsf c(S) = \max \mathsf C(S) &=& \max\{\mathsf c(b) : b \in \Betti(S)\} \text{ and } \\
\min \mathsf C(S) &=& \min\{\mathsf c(b) : b \in \Betti(S)\}.  
\end{array}$$
\end{thm}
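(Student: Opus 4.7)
The equality $\mathsf{c}(S)=\max\mathsf{C}(S)$ is the definition, so the substance is the two characterizations in terms of Betti elements. In each case, one inequality is immediate: since $\Betti(S)\subset S$ and every Betti element has $\mathsf{c}(b)>0$ (its factorization graph being disconnected forces factorizations of $b$ with trivial gcd and hence positive pairwise distance), we have $\max\{\mathsf{c}(b):b\in\Betti(S)\}\le\max\mathsf{C}(S)$ and, interpreting $\min\mathsf{C}(S)$ as the minimum over positive catenary degrees, $\min\mathsf{C}(S)\le\min\{\mathsf{c}(b):b\in\Betti(S)\}$.

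For the maximum, set $N=\max\{\mathsf{c}(b):b\in\Betti(S)\}$; the plan is to show that for any $n\in S$ and any $\aa,\bb\in\mathsf{Z}(n)$ there is an $N$-chain from $\aa$ to $\bb$. The key structural input is that Betti elements generate the minimal presentation of $S$, so there exists a sequence $\aa=\aa_0,\aa_1,\ldots,\aa_r=\bb$ in $\mathsf{Z}(n)$ in which each consecutive pair decomposes as $\aa_{i-1}=\cc_i+\dd_i$ and $\aa_i=\cc_i+\dd_i'$ for some $\dd_i,\dd_i'\in\mathsf{Z}(b_i)$ with $b_i\in\Betti(S)$. Since $\mathsf{c}(b_i)\le N$, there is an $N$-chain $\dd_i=\ee_0,\ee_1,\ldots,\ee_s=\dd_i'$ inside $\mathsf{Z}(b_i)$, and adding $\cc_i$ termwise lifts this to an $N$-chain from $\aa_{i-1}$ to $\aa_i$ inside $\mathsf{Z}(n)$. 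This lift uses translation invariance of the distance function, which follows from the identity $\gcd(\cc+\xx,\cc+\yy)=\cc+\gcd(\xx,\yy)$. Concatenating these chains proves $\mathsf{c}(n)\le N$.

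For the minimum, the strategy is a descent argument. Fix $m\in S$ with $\mathsf{c}(m)>0$ and a witness pair $\aa,\bb\in\mathsf{Z}(m)$ for which no chain from $\aa$ to $\bb$ avoids a step of distance $\mathsf{c}(m)$. If $g=\gcd(\aa,\bb)\ne\0$, the pair $\aa-g,\bb-g\in\mathsf{Z}(m-\pi(g))$ has trivial gcd; the plan is to iterate this peeling, at each stage ensuring the resulting smaller element has catenary degree bounded by $\mathsf{c}(m)$. Once the pair has trivial gcd and any intermediate shared-support path between them has been ruled out, the current monoid element must itself be a Betti element, furnishing $b\in\Betti(S)$ with $\mathsf{c}(b)\le\mathsf{c}(m)$.

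The main obstacle is controlling catenary degrees during this descent. A naive passage from $m$ to $m-\pi(g)$ need not decrease or even preserve the relevant catenary degree, since chains in $\mathsf{Z}(m)$ may traverse factorizations that do not contain $g$ in their support, offering shortcuts unavailable in $\mathsf{Z}(m-\pi(g))$. Resolving this requires a careful choice of witness pair together with a detailed comparison between $R$-classes in $\mathsf{Z}(m)$ and those in $\mathsf{Z}(m-\pi(g))$, likely via induction on $m$ or on $|\aa|+|\bb|$, using the precise structure of Betti trades afforded by the minimal presentation. This is where I expect the bulk of the technical work to lie.
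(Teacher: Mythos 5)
The paper gives no proof of this theorem---it is quoted from \cite{catenarytamefingen} (maximum) and \cite{mincatdeg} (minimum)---so your attempt is measured against those arguments. Your proof of the maximum is correct and is essentially the argument of \cite{catenarytamefingen}: the defining congruence is generated by pairs of factorizations of Betti elements, so any $\aa,\bb\in\mathsf Z(n)$ are joined by a sequence of trades $\aa_{i-1}=\cc_i+\dd_i$, $\aa_i=\cc_i+\dd_i'$ with $\dd_i,\dd_i'\in\mathsf Z(b_i)$, each trade is refined to an $N$-chain inside $\mathsf Z(b_i)$ because $\mathsf c(b_i)\le N$, and translation invariance of $d$ lifts the refinement. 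Your observation that the second displayed equality can only be intended with $\min\mathsf C(S)$ read as the minimum \emph{nonzero} catenary degree is also right (generators satisfy $\mathsf c(n_i)=0$ while Betti elements have positive catenary degree), and this is the convention of \cite{mincatdeg}.

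The minimum half, however, is not a proof, and the difficulty you flag in your last paragraph is not a technicality to be deferred---it is the entire content of that half. Two specific steps fail. First, the peeling step carries no bound: the catenary degree is not monotone under divisibility, so replacing $\aa,\bb\in\mathsf Z(m)$ by $\aa-g,\bb-g$ in the factorization set of the smaller element gives no inequality $\mathsf c(m-\pi(g))\le\mathsf c(m)$, precisely because chains in $\mathsf Z(m)$ may route through factorizations that do not contain $g$ and hence do not descend. Second, the termination claim is circular: an element admitting two factorizations with trivial gcd need not be a Betti element, since its factorization graph can still be connected through third factorizations, and ruling those out is exactly what must be proved. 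The missing idea is a global bound in place of a local descent. For instance, for any $m$ and any distinct $\xx,\yy\in\mathsf Z(m)$, subtracting $\gcd(\xx,\yy)$ shows that $d(\xx,\yy)$ is at least the minimum of $\max(|\aa|,|\bb|)$ taken over all pairs of distinct gcd-trivial factorizations of a common element of $S$; this already forces every nonzero catenary degree to exceed that threshold, and the substantive work of \cite{mincatdeg} is to identify the threshold with $\min\{\mathsf c(b):b\in\Betti(S)\}$ via a carefully chosen (e.g.\ divisibility-minimal) witness. As written, your descent neither terminates at a Betti element nor preserves the bound $\mathsf c(\cdot)\le\mathsf c(m)$ along the way, so the inequality $\min\{\mathsf c(b):b\in\Betti(S)\}\le\mathsf c(m)$ is not established.
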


\begin{example}\label{e:bettielements}
Let $S = \<11,25,29\>$.  The catenary degrees of $S$ are plotted in Figure~\ref{fig:bettielements}.  The only Betti elements of $S$ are 58, 150, and 154, which have catenary degrees 4, 12, and 14, respectively.  The Betti elements can be seen in the plot as the first occurence of each of these three values.  Notice that $\mathsf c(175) = 11$ is distinct from each of these values.  By Theorem~\ref{t:maxcatdegree}, every element of $S$ with at least two distinct factorizations has catenary degree at least 4 and at most 14.  
\end{example}

\begin{figure}
\includegraphics[width=5in]{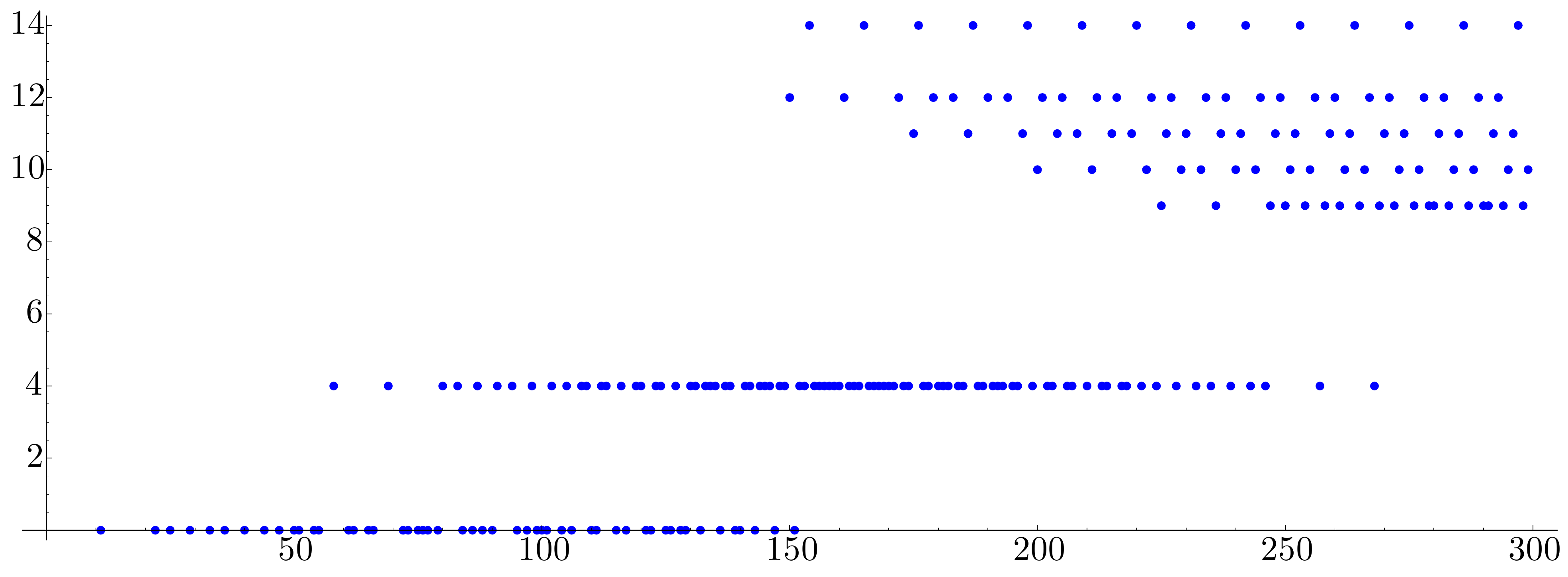}
\medskip
\caption{A \texttt{SAGE} plot \cite{sage} showing the catenary degrees for the numerical monoid $S = \<11,25,29\>$ discussed in Example~\ref{e:bettielements}.}
\label{fig:bettielements}
\end{figure}

As with the delta set, understanding the long-term behavior of the catenary degree (when viewed as a function $\mathsf c: S \to \mathbb N$) is of particular interest.  Theorem~\ref{t:catenaryperiodic} shows that the catenary degree in numerical monoids is also eventually periodic, a phenomenon that can be seen in Figure~\ref{fig:bettielements}.  

\begin{thm}[{\cite[Theorem~3.1]{catenaryperiodic}}]\label{t:catenaryperiodic}
Fix a numerical monoid $S = \<n_1, \ldots, n_k\>$.  The catenary degree function $\mathsf c:S \to \NN$ is eventually periodic, and moreover, its period divides $\lcm(n_1, \ldots, n_k)$.  
\end{thm}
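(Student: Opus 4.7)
Set $L = \lcm(n_1, \ldots, n_k)$ and $q_i = L/n_i$ for each $i$, so that $q_i n_i = L$. My plan is to show that $\mathsf c(n+L) = \mathsf c(n)$ for all $n \in S$ past an explicit threshold, which yields the desired eventual periodicity with period dividing $L$. The key construction is the family of translation maps $T_i : \mathsf Z(n) \to \mathsf Z(n+L)$ given by $T_i(\aa) = \aa + q_i \ee_i$; since each $T_i$ acts by a constant additive shift, it is injective and isometric in the factorization metric $d$. A pigeonhole argument applied to $\sum_j a_j n_j = n+L$ shows that for $n \ge (k-1)L$, every $\aa \in \mathsf Z(n+L)$ satisfies $a_i \ge q_i$ for some $i$, yielding the covering $\mathsf Z(n+L) = \bigcup_{i=1}^k T_i(\mathsf Z(n))$. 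Lifting chains through a single $T_i$ then establishes $\mathsf c(n+L) \ge \mathsf c(n)$: any chain of width $w$ in $\mathsf Z(n+L)$ between endpoints of the form $T_1(\aa), T_1(\bb)$ can be descended, vertex by vertex through partial inverses $T_j^{-1}$ allowed by the covering, to a chain in $\mathsf Z(n)$ from $\aa$ to $\bb$ of width at most $w$.

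For the reverse inequality $\mathsf c(n+L) \le \mathsf c(n)$, I would take arbitrary $\xx = T_i(\xx'), \yy = T_j(\yy') \in \mathsf Z(n+L)$ and traverse the three-leg path $\xx \leftrightarrow T_1(\xx') \leftrightarrow T_1(\yy') \leftrightarrow \yy$: the middle leg lifts a $\mathsf c(n)$-chain from $\xx'$ to $\yy'$ through $T_1$, while the outer legs are ``trade'' moves. A direct trade edge $T_i(\aa) \leftrightarrow T_1(\aa)$ has distance $\max(q_1, q_i)$, which can exceed $\mathsf c(n)$; however, it encodes two factorizations $q_1 \ee_1, q_i \ee_i \in \mathsf Z(L)$ translated by $\aa$, and by Theorem~\ref{t:maxcatdegree} these can be connected inside $\mathsf Z(L)$ by a chain of width at most $\mathsf c(S)$, which translates to a bridging chain in $\mathsf Z(n+L)$ of the same width. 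This yields $\mathsf c(n+L) \le \max(\mathsf c(n), \mathsf c(S))$.

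The main obstacle is turning this weaker bound into the sharp equality $\mathsf c(n+L) = \mathsf c(n)$ when $\mathsf c(n) < \mathsf c(S)$. I expect the technical core to be a careful Ap\'ery-style or residue-class analysis---analogous in spirit to the derivation of $N_S$ in Theorem~\ref{t:deltaperiodic}---showing that past an explicit threshold $N_0$, any necessary trade edge can be realized through a chain in $\mathsf Z(n+L)$ of width at most $\mathsf c(n)$ on the relevant residue class modulo $L$, so that both inequalities collapse. An alternative potential approach would be to pass to the lattice polytope $P_n = \{x \in \RR^k_{\ge 0} : x \cdot \nn = n\}$, whose combinatorial structure is $L$-periodic in $n$, and derive the desired equality from an isomorphism of the associated factorization graphs at threshold $\mathsf c(n)$.
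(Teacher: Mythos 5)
There is a genuine gap --- in fact two, one in each direction of the claimed equality $\mathsf c(n+L) = \mathsf c(n)$. For the inequality $\mathsf c(n+L) \ge \mathsf c(n)$, your descent step does not work as stated. Each $T_i$ is indeed an isometric injection, but a chain $\cc_0 = T_1(\aa), \cc_1, \ldots, \cc_r = T_1(\bb)$ in $\mathsf Z(n+L)$ may have consecutive vertices whose only available preimages come from \emph{different} maps: if $\cc_{j-1} = T_{i}(\cc_{j-1}')$ and $\cc_j = T_{i'}(\cc_j')$ with $i \ne i'$ forced by the covering, then $\cc_{j-1}' - \cc_j' = (\cc_{j-1}-\cc_j) - q_i\ee_i + q_{i'}\ee_{i'}$, and the distance $d(\cc_{j-1}',\cc_j')$ can exceed $d(\cc_{j-1},\cc_j)$ by as much as $\max(q_i,q_{i'})$. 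So the descended sequence need not be a $w$-chain, and the inequality is not established. For the inequality $\mathsf c(n+L) \le \mathsf c(n)$, your three-leg construction only yields $\mathsf c(n+L) \le \max(\mathsf c(n), \mathsf c(S))$, which equals $\mathsf c(S)$ since $\mathsf c(n) \le \mathsf c(S)$ by definition --- a vacuous bound. You acknowledge this and defer the actual content (``any necessary trade edge can be realized through a chain of width at most $\mathsf c(n)$'') to an analysis you only ``expect'' to exist; that deferred step is precisely the theorem.

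It is worth noting that your overall architecture is inverted relative to what is actually known. As Remark~\ref{r:catenaryperiodic} records, the proof in the cited reference establishes only a one-sided monotonicity (so that $\mathsf c(n), \mathsf c(n+L), \mathsf c(n+2L), \ldots$ is eventually a nonincreasing sequence of positive integers, hence eventually constant on each residue class modulo $L$), and as a consequence \emph{no explicit bound on the start of the periodic behavior is known}. Your proposal claims an explicit threshold such as $(k-1)L$ past which $\mathsf c(n+L)=\mathsf c(n)$ holds outright; if correct, this would already improve on the literature, and the two gaps above are exactly where the argument breaks. The covering $\mathsf Z(n+L) = \bigcup_i T_i(\mathsf Z(n))$ for $n \ge (k-1)L$ and the translation trick through $\mathsf Z(L)$ are sound ingredients, but they must be combined into a proof of the single inequality $\mathsf c(n+L) \le \mathsf c(n)$ (not equality), after which the conclusion follows non-effectively from well-ordering.
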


\begin{remark}\label{r:catenaryperiodic}
Among the eventual periodicity results in this paper (Theorems~\ref{t:elasticityset}, \ref{t:deltaperiodic}, \ref{t:catenaryperiodic}, and~\ref{t:omegaquasi}), Theorem~\ref{t:catenaryperiodic} provides the most room for improvement.  There is currently no known bound on the start of this periodic behavior, as its proof in \cite{catenaryperiodic} relies on the fact that a nonincreasing sequence of positive integers is eventually constant.  Additionally, the period is only known to divide the least common multiple of the monoid generators, but in practice this bound is far from sharp.  It remains an interesting problem to refine understanding of these properties.  
\end{remark}

Examination of the delta set and the $\omega$-primality invariant (intoduced in Section~\ref{s:omega}) both benefit from the use of dynamic algorithms; see Remarks~\ref{r:deltadynamic} and~\ref{r:omegadynamic}.  An answer to Problem~\ref{pr:catenarydynamic} is likely within reach, and would aide in refining Theorem~\ref{t:catenaryperiodic}.  

\begin{prob}\label{pr:catenarydynamic}
Find a dynamic algorithm to compute the catenary degree of numerical monoid elements.  
\end{prob}

\section{$\omega$-primality}\label{s:omega}

In cancellative, commutative monoids, non-unique factorizations arise from irreducible elements that are not prime.  The rich factorization theory in numerical monoids stems in part from the fact that no element is prime.  In this section, we develop the notion of $\omega$-primality, which measures how ``far from prime'' an element is in a natural way (see \cite{prime,omegamonthly} for a more thorough introduction).  We begin with a definition.   

\begin{defn}\label{d:omega}
Let $S = \<n_1, n_2, \ldots, n_k\>$ be a numerical monoid.  For $n \in S$, define the \emph{$\omega$-primality of $n$} to be $\omega(n) = m$ if $m$ is the smallest positive integer with the property that whenever $\aa \in \NN^k$ satisfies $\sum_{i=1}^k a_in_i - n \in S$ with $|\aa| > m$, there exists a $\bb \in \NN^k$ with $b_i \leq a_i$ for each $i$ such that $\sum_{i=1}^k b_i n_i - n \in S$ and $|\bb| \leq m$.  
\end{defn}

In a multiplicative monoid $M$, an element $n \in M$ is prime if whenever $n \mid ab$ for $a,b \in M$, then either $n \mid a$ or $n \mid b$.  Definition~\ref{d:omega} is a generalization of primeness, written additively in the context of a numerical monoid.  In fact, in a general cancellative, commutative monoid, $\omega(n) = 1$ if and only if $n$ is prime.  

In practice, the $\omega$-primality of an element $n$ of a numerical monoid $S = \<n_1, \ldots, n_k\>$ is computed by finding those $\aa \in \NN^k$ that are maximal among those with the property that $\sum_{i=1}^k a_in_i - n$ lies in $S$ (Definition~\ref{d:bullet}).  Proposition~\ref{p:bullet} states their precise relationship to the $\omega$-value of $n$.  

\begin{defn}\label{d:bullet}
We say that $\aa \in \NN^k$ is a \emph{bullet} for $n$ if (i)~$\left(\sum_{i=1}^n a_i n_i\right) - n \in S$ and (ii)~$(\sum_{i=1}^k a_in_i) - n_j - n \not \in S$ whenever $a_j > 0$.  A bullet $\aa$ for $n$ is \emph{maximal} if $|\vec a| = \omega(n)$.  The \emph{set of bullets} for an element $n$ is denoted by $\bul(n)$.
\end{defn}

In general, an element $n$ has several bullets with different lengths.  As stated in Proposition~\ref{p:bullet}, the largest such length coincides with the $\omega(n)$, justifying the above definition for a \emph{maximal bullet}.

\begin{prop}[{\cite[Proposition~2.10]{omegamonthly}}]\label{p:bullet}
Given any numerical monoid $S$, 
$$\omega(n) = \max\{|\aa| : \aa \in \bul(n)\}$$
for each element $n \in S$.  
\end{prop}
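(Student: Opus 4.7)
I plan to prove $\omega(n) = M$, where $M := \max\{|\aa| : \aa \in \bul(n)\}$, by establishing both inequalities. The cornerstone observation is that $\bul(n)$ coincides with the set of componentwise-minimal elements of the upward-closed set $U_n := \{\aa \in \NN^k : \sum_i a_i n_i - n \in S\}$; upward-closure is immediate from the fact that $S$ is closed under addition. For the identification, if $\aa$ is not a bullet then some $\aa - \ee_j$ with $a_j > 0$ lies in $U_n$ and witnesses non-minimality; conversely, if $\bb < \aa$ lies in $U_n$ one picks any coordinate $j$ with $b_j < a_j$ and observes that $\aa - \ee_j \ge \bb$ componentwise, so $\aa - \ee_j \in U_n$ by upward-closure. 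Dickson's lemma then guarantees $|\bul(n)| < \infty$, so $M$ is a well-defined nonnegative integer.

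For the inequality $\omega(n) \ge M$, I would fix a bullet $\aa^*$ with $|\aa^*| = M$ and suppose for contradiction that $\omega(n) < M$. Applying Definition~\ref{d:omega} to $\aa^*$ (which lies in $U_n$ with $|\aa^*| > \omega(n)$) produces some $\bb \le \aa^*$ with $\bb \in U_n$ and $|\bb| \le \omega(n) < |\aa^*|$; the length inequality forces $b_j < a^*_j$ in at least one coordinate $j$. The dictionary from the first paragraph then yields $\aa^* - \ee_j \in U_n$, contradicting the bullet property of $\aa^*$.

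For the reverse inequality $\omega(n) \le M$, I would verify that $m = M$ satisfies the defining condition of Definition~\ref{d:omega}. Given any $\aa \in U_n$ with $|\aa| > M$, the element $\aa$ cannot itself be a bullet, so there exists $j$ with $a_j > 0$ such that $\aa - \ee_j \in U_n$; iterating this single-coordinate descent terminates (since lengths strictly decrease) at some $\bb \le \aa$ which must be a bullet, and $|\bb| \le M$ holds by the definition of $M$.

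The main conceptual step is the bullet/minimal-element dictionary of the first paragraph; its nontrivial content is that non-minimality can always be witnessed by a single-coordinate reduction, which rests squarely on the upward-closure of $U_n$. Once this dictionary is in hand, the two inequalities reduce to a short contradiction argument and a short descent argument, respectively, so I do not expect further obstacles beyond careful bookkeeping of the quantifiers in Definition~\ref{d:omega}.
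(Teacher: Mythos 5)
Your proof is correct, and the paper itself offers no argument for this proposition beyond citing \cite[Proposition~2.10]{omegamonthly}; your identification of $\bul(n)$ with the set of componentwise-minimal elements of the upward-closed set $U_n$, followed by the two inequalities via contradiction and single-coordinate descent, is precisely the standard argument given in that reference. The only cosmetic caveat is the degenerate element $n=0$ (where $M=0$ while Definition~\ref{d:omega} asks for a \emph{positive} integer), an edge case the survey's own phrasing also glosses over.
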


\begin{example}\label{e:omega}
Let $S = \<6,9,20\>$ denote the numerical monoid from Example~\ref{e:numericalmonoid}.  Consider the following sets: 
$$\begin{array}{rcl}
\bul(40) &=& \{(0,0,2),(4,4,0),(7,2,0),(10,0,0),(1,6,0),(0,8,0)\}, \\
\bul(51) &=& \{(0,7,0),(10,0,0),(4,3,0),(1,5,0),(0,0,3),(7,1,0)\}, \\
\bul(54) &=& \{(9,0,0),(6,2,0),(0,6,0),(3,4,0),(0,0,3)\}, \\
\bul(60) &=& \{(4,4,0),(7,2,0),(10,0,0),(1,6,0),(0,8,0),(0,0,3)\}.
\end{array}$$
We see that $(10,0,0) \in \bul(40)$ since $10 \cdot 6 - 40 = 20 \in S$, but $9 \cdot 6 - 40 = 14 \notin S$.  Similarly, $(7,2,0) \in \bul(60)$ since $7 \cdot 6 + 2 \cdot 9 - 60 = 0 \in S$, but omitting a single $6$ or $9$ produces a difference lying outside of $S$.  Once the above sets have been determined, Proposition~\ref{p:bullet} implies that $\omega(54) = 9$ and $\omega(40) = \omega(51) = \omega(60) = 10$.  
\end{example}

\begin{remark}\label{r:omegadynamic}
Several recent works examine methods of computing $\omega$-primality in numerical monoids \cite{andalg,progomeganumerical,semitheor,omegaasymp}.  At the time of writing, the fastest known algorithm (which is implemented in the \texttt{GAP} package \texttt{numericalsgps} \cite{numericalsgpsgap}) utilizes dynamic programming to compute bullets \cite{dynamicalg}, similar to the dynamic algorithm used to compute the delta set (Remark~\ref{r:deltadynamic}).  Example~\ref{e:omegadynamic} demonstrates the idea behind this algorithm.  
\end{remark}

\begin{example}\label{e:omegadynamic}
Resuming notation from Example~\ref{e:omega}, for each bullet $\aa \in \bul(54)$, either $\aa \in \bul(60)$ or $\aa + \ee_1 \in \bul(60)$.  Notice that it is impossible for both of these to lie in $\bul(60)$ by Definition~\ref{d:bullet}(ii).  Similarly, bullets in $\bul(51)$ and $\bul(40)$ give rise to bullets in $\bul(60)$.  Moreover, each bullet for $60$ is the ``image'' of a bullet for $54$, $51$ or $40$ in this way.  Much like the recurrence relation for sets of factorizations (Remark~\ref{r:deltadynamic}), the set of bullets of an element $n \in S$ is determined by the ``images'' of the bullets of $n - 6$, $n - 9$ and $n - 20$; see \cite{dynamicalg} for more detail.  
\end{example}

In general, the existence of non-unique factorizations coincides with the existence of non-prime irreducible elements.  As such, the $\omega$-primality invariant was originally constructed to measure how far irreducible elements were from being prime (see, for example,~\cite{prime, bounding}).  Although some progress has been made in the setting of numerical monoids \cite{andalg,omegadim3,interval}, Problem~\ref{pr:omegagens} remains largely open.  

\begin{prob}\label{pr:omegagens}
Determine the $\omega$-values of numerical monoid generators.  
\end{prob}

\begin{remark}\label{r:tamedegree}
Both the catenary degree (Definition~\ref{d:catenarydegree}) and a variation called the tame degree share a surprising connection to $\omega$-primality.  Given $n \in S$, the \emph{tame degree} $\mathsf t(n)$ equals the minimum value $m$ such that every factorization of $n$ with at least one zero entry is distance at most $m$ (in the sense of Definition~\ref{d:catenarydistance}) from a fully supported factorization of $n$, or $\mathsf t(n) = 0$ if $n$ has no fully supported factorizations \cite{catenarytamenumerical,catenarytamefingen}.  As with the catenary degree, the tame degree of $S$ is defined as $\mathsf t(S) = \sup \{\mathsf t(n) : n \in S\}$ and is known to be finite.  It is not hard to show that $\mathsf c(S) \le \mathsf t(S)$, but what is perhaps more surprising is that $\mathsf c(S) \le \omega(S) \le \mathsf t(S)$, where $\omega(S) = \max\{\omega(n_1), \ldots, \omega(n_k)\}$ is the maximum $\omega$-value obtained at an irreducible \cite{semitheor}.  This connection has been explored in some of the recent literature, in part as a method of bounding $\omega(S)$ \cite{semitheor,halffactorial,cattame3}.  
\end{remark}

Recent investigations focusing on the $\omega$-values of all non-unit elements in monoids have uncovered interesting asymptotic behavior.  In the setting of numerical monoids, $\omega$-primality grows in a periodic, linear fashion when viewed as a function $\omega: S \to \NN$.

\begin{thm}[{\cite{dynamicalg,omegaasymp,omegaquasi}}]\label{t:omegaquasi}
Let $S = \<n_1, n_2, \ldots, n_k\>$ be a numerical monoid.  The $\omega$-primality function is eventually quasilinear.  More specifically, for $N_0 = \frac{F(S) + n_2}{n_2/n_1 - 1}$, we have $\omega(n) = \frac{1}{n_1}n + a_0(n)$ for every $n > N_0$, where $a_0: \NN \to \QQ$ is $n_1$-periodic.  
\end{thm}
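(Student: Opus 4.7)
The plan is to prove that, for $n > N_0$, every maximum-length bullet for $n$ must have a positive first coordinate, and then to derive the recurrence $\omega(n + n_1) = \omega(n) + 1$ by an exchange argument on $\ee_1$.  The recurrence immediately yields the $n_1$-periodicity of $a_0(n) := \omega(n) - n/n_1$ on $\{n : n > N_0\}$, which extends periodically to all of $\NN$.

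The preparatory step is an upper bound on bullets with $a_1 = 0$.  Given such an $\aa$ with offset $s := \sum_i a_i n_i - n \in S$, if $j_0 \ge 2$ is the smallest index with $a_{j_0} > 0$, then the bullet condition forces $s - n_{j_0} \notin S$, so $s \le F(S) + n_{j_0}$.  Combined with $n_2 |\aa| \le \sum_i a_i n_i = n + s$, this yields the crude bound $|\aa| \le 1 + (n + F(S))/n_2$.  On the other hand, writing $\Ap(S; n_1) = \{w_0, \ldots, w_{n_1 - 1}\}$ with $w_i$ the smallest element of $S$ congruent to $i$ modulo $n_1$, and choosing $r \in \{0, \ldots, n_1 - 1\}$ with $r \equiv -n \pmod{n_1}$, the pure-$n_1$ factorization $(m^*, 0, \ldots, 0)$ with $m^* := (n + w_r)/n_1$ is easily checked to be a bullet, giving $\omega(n) \ge m^* \ge n/n_1$.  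The threshold $N_0 = n_1(F(S) + n_2)/(n_2 - n_1)$ is chosen precisely so that $n/n_1 > 1 + (n + F(S))/n_2$ for $n > N_0$, forcing every max-length bullet to satisfy $a_1 > 0$.

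With this structural fact in hand, the recurrence follows from two symmetric exchange checks.  For $\omega(n + n_1) \le \omega(n) + 1$, take a max-length bullet $\aa$ for $n + n_1$ (which has $a_1 > 0$ once $n + n_1 > N_0$) and verify that $\aa - \ee_1$ is a bullet for $n$ of length $|\aa| - 1$: the offset is preserved by the shift, and the bullet condition at each surviving coordinate transfers directly from $\aa$.  For $\omega(n + n_1) \ge \omega(n) + 1$, take a max-length bullet $\aa$ for $n$ (with $a_1 > 0$ since $n > N_0$) and dually verify that $\aa + \ee_1$ is a bullet for $n + n_1$.  Combining the two directions yields $\omega(n + n_1) = \omega(n) + 1$ for $n > N_0$, and hence the $n_1$-periodicity of $a_0$.

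I expect the main obstacle to be calibrating the threshold $N_0$ correctly, which hinges on the precise bound $s \le F(S) + n_{j_0}$ for bullets with $a_1 = 0$ and the comparison with the pure-$n_1$ lower bound.  Notably, $a_0$ is \emph{not} in general just $w_r/n_1$: a mixed bullet with larger Ap\'ery offset $s = w_{r_0}$ for some $r_0 \ne r$ can exceed the pure-$n_1$ bullet length, at the cost of the length penalty $\sum_{j \ge 2} a_j(n_j - n_1)$.  This strategy sidesteps identifying $a_0$ in closed form by extracting the periodicity directly from the coordinate-shift recurrence.
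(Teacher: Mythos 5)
Your argument is correct and follows essentially the same route as the sources the paper cites for this theorem (the survey itself gives no proof): bound the length of bullets avoiding $n_1$, exhibit the pure-$n_1$ Ap\'ery bullet as a longer competitor once $n > N_0$, and extract $\omega(n+n_1)=\omega(n)+1$ from the $\pm\ee_1$ shift on maximal bullets; the exchange verifications and the periodicity conclusion are all sound. One small repair: as written, combining $n_2|\aa| \le n+s$ with $s \le F(S)+n_{j_0}$ only gives $|\aa| \le (n+F(S)+n_{j_0})/n_2$, which exceeds your claimed $1+(n+F(S))/n_2$ whenever $j_0>2$ (so $n_{j_0}>n_2$), and the weaker bound would force a larger threshold than the stated $N_0$. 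To get the bound you actually use, peel off a single copy of $n_{j_0}$ first: $\sum_i a_i n_i \ge n_{j_0} + (|\aa|-1)n_2$, so $(|\aa|-1)n_2 \le n + s - n_{j_0} \le n + F(S)$, which is exactly $|\aa| \le 1 + (n+F(S))/n_2$ and calibrates $N_0$ correctly.
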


Theorem~\ref{t:omegaquasi} implies that the graph of the function $\omega:S \to \NN$ eventually has the form of $n_1$ lines with common slope $1/n_1$.  Figure~\ref{fig:omegaquasi} demonstrates this phenomenon.  

\begin{figure}
\includegraphics[width=2.5in]{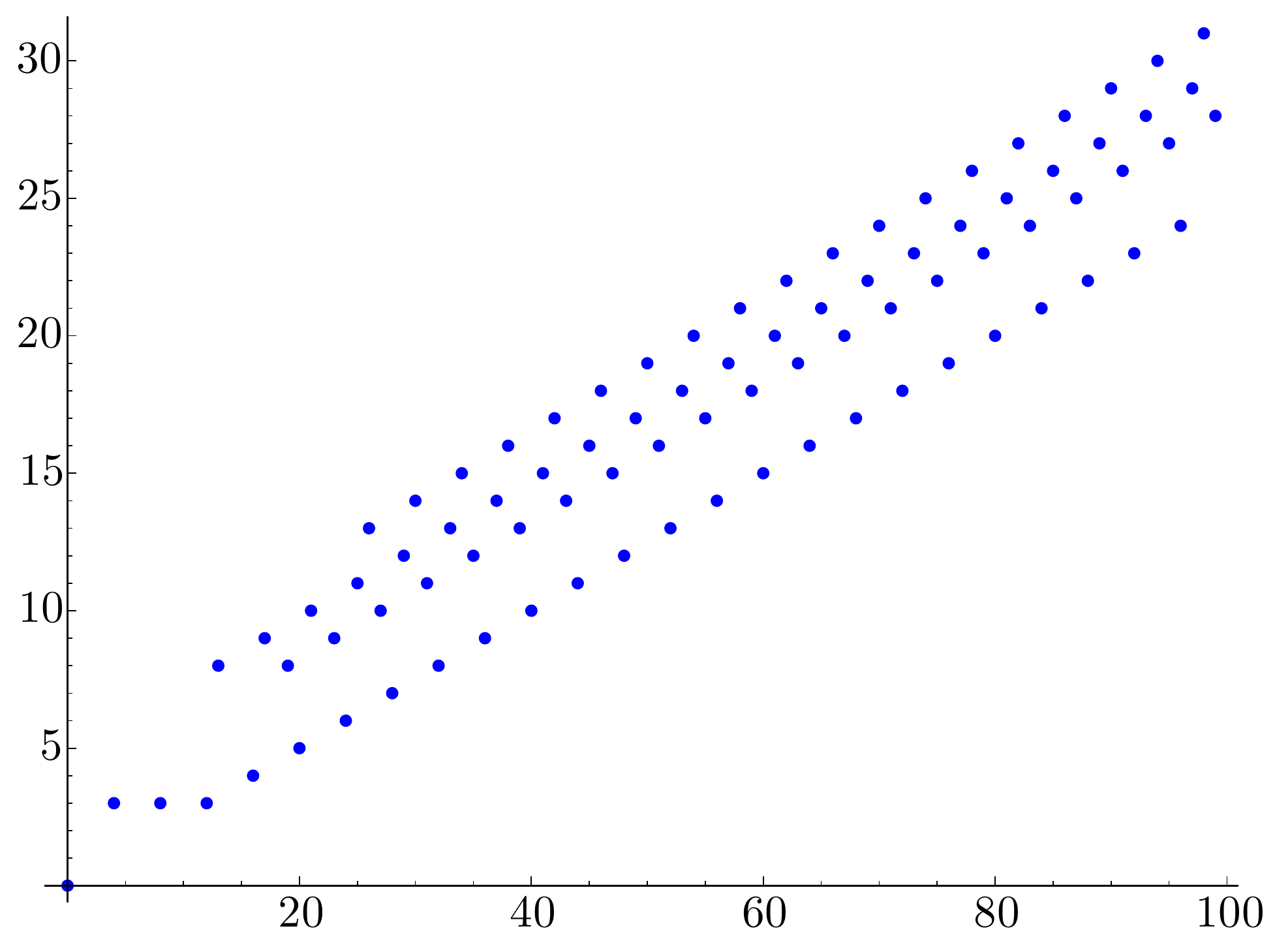}
\hspace{0.5in}
\includegraphics[width=2.5in]{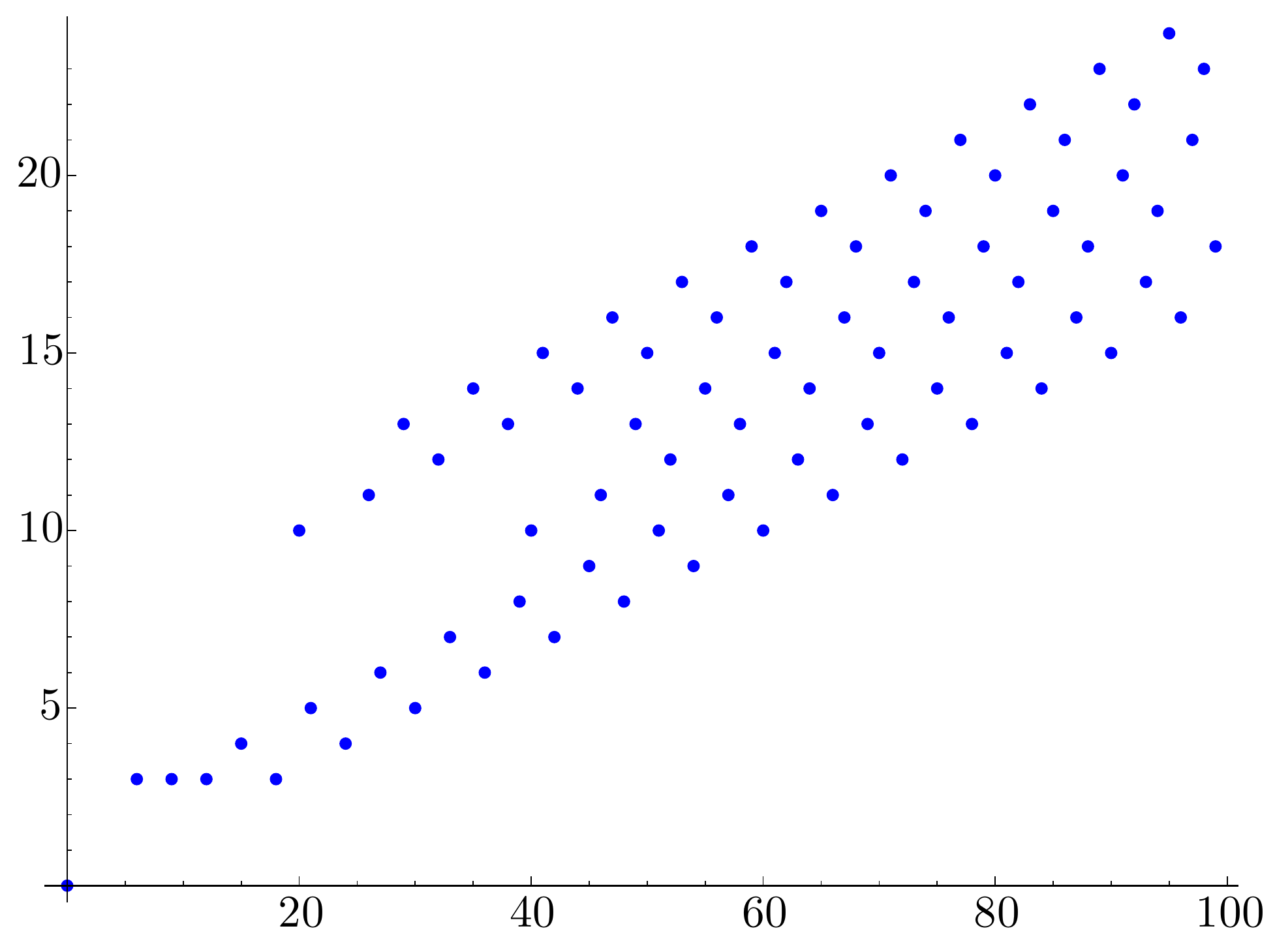}
\medskip
\caption{A plot showing the $\omega$-function for the numerical monoids $\<4,13,19\>$ (left) and $\<6,9,20\>$ (right), produced using \texttt{SAGE} \cite{sage}.}  
\label{fig:omegaquasi}
\end{figure}

\begin{remark}\label{r:omegaquasi}
We resume notation from Theorem~\ref{t:omegaquasi}.  In contrast to Theorem~\ref{t:deltaperiodic}, the period of the function $a_0$ is known to be exactly $n_1$ \cite{dynamicalg}.  Additionally, the value of $N_0$, though not tight, is in general a relatively good bound on the start of the quasilinear behavior of $\omega_S$ \cite[Remark~6.8]{dynamicalg}.  Each of these improvements on the original statement of Theorem~\ref{t:omegaquasi} in \cite{omegaasymp,omegaquasi} are heavily motivated by data produced by the dynamic algorithm described in Remark~\ref{r:omegadynamic}.  
\end{remark}


\end{document}